\newcommand{\s}{{\sigma}}
 \renewcommand{\a}{\alpha}
\renewcommand{\b}{\beta}
\newcommand{\g}{\gamma}
\newcommand{\G}{\Gamma}
\renewcommand{\l}{\lambda}
\renewcommand{\(}{\left\(}
\renewcommand{\)}{\right\)}
\renewcommand{\[}{\left\[}
\renewcommand{\]}{\right\]}
\numberwithin{equation}{section}
 \theoremstyle{plain}
\newtheorem{theorem}{Theorem}[section]
\newtheorem{lemma}[theorem]{Lemma}
\newtheorem{corollary}[theorem]{Corollary}
\def\proof{\@ifnextchar[{\@oproof}{\@nproof}}
\def\@oproof[#1][#2]{\trivlist\item[\hskip\labelsep\textit{#2 Proof of\
#1.}~]\ignorespaces}
\def\@nproof{\trivlist\item[\hskip\labelsep\textit{Proof.}~]\ignorespaces}
\begin{document}
\title[]{Koshliakov zeta functions I: Modular Relations } 
\dedicatory{In memory of Nikolai Sergeevich Koshliakov}
\author{Atul Dixit, Rajat Gupta}
\address{Discipline of Mathematics, Indian Institute of Technology Gandhinagar, Palaj, Gandhinagar 382355, Gujarat, India} 
\email{adixit@iitgn.ac.in, ~rajat\_gupta@iitgn.ac.in }
\begin{abstract}
We examine an unstudied manuscript of N.~S.~Koshliakov over $150$ pages long and containing the theory of two interesting generalizations $\zeta_p(s)$ and $\eta_p(s)$ of the Riemann zeta function $\zeta(s)$, which we call \emph{Koshliakov zeta functions}. His theory has its genesis in a problem in the analytical theory of heat distribution which was analyzed by him. In this paper, we further build upon his theory and obtain two new modular relations in the setting of Koshliakov zeta functions, each of which gives an infinite family of identities, one for each $p\in\mathbb{R^{+}}$. The first one is a generalization of Ramanujan's famous formula for $\zeta(2m+1)$ and the second is an elegant extension of a modular relation on page $220$ of Ramanujan's Lost Notebook. Several interesting corollaries and applications of these modular relations are obtained including a new representation for $\zeta(4m+3)$.
\end{abstract}
\thanks{2010 Mathematics Subject Classification: Primary 11M06 Secondary 11M99, 33E99\\
Keywords: Riemann zeta function, Koshliakov zeta functions, odd zeta values, modular relation}
\maketitle
\tableofcontents
\section{An unstudied manuscript of N.~S.~Koshliakov}
The discovery of Ramanujan's Lost Notebook, which is a 138-page manuscript (along with some loose papers) which Ramanujan worked on in the last year of his life, has been considered to be the mathematical equivalent of the discovery of Beethoven's tenth symphony \cite{AB5}. Ever since George E.~Andrews discovered the Lost Notebook in 1976, there have been hundreds of papers written on the formulas given in there spanning variegated areas of mathematics such as analytic number theory, $q$-series, theory of partitions, modular and mock modular forms, mathematical physics, to name a few. The charming story of the discovery of the Lost Notebook is well-known, see, for example, \cite{discoveryrln}.

In this paper, we bring to light a manuscript which fell through the cracks during the aftermath of World War II. This manuscript was written by Nikolai Sergeevich Koshliakov, an outstanding Russian mathematician, who made seminal contributions to analytic number theory and differential equations. An interesting account of the adverse conditions in which it was written and how it became available to the mathematical community is documented in the article \cite{bfikm} (see also \cite{lady}). Since it is extremely inspiring, we reproduce it below so as to make it available to the broader mathematical community. 

\begin{quote} The repressions of the thirties which affected scholars in Leningrad continued even after the outbreak of the Second World War. In the winter of 1942 at the height of the blockade of Leningrad, Koshlyakov along with a group \dots was arrested on fabricated \dots dossiers and condemned to 10 years correctional hard labour.  After the verdict he was exiled to one of the camps in the Urals. \dots On the grounds of complete exhaustion and complicated pellagra, Koshlyakov was classified in the camp as an invalid and was not sent to do any of the usual jobs. \dots very serious shortage of paper.  He was forced to carry out calculations on a piece of plywood, periodically scraping off what he had written with a piece of glass. Nevertheless, between 1943 and 1944 Koshlyakov wrote two long memoirs \emph{Issledovanie nekotorykh voprosov analyticheskoi teorii rational'nogo i
kvadratichnogo polya} (A study of some questions in the analytic theory of
rational and quadratic fields) and \emph{Issledovanie odnogo klassa transtsendentnykh
funktsii, opredelyaemykh obobshchennym yravneniem Rimana} (A study of a class
of transcendental functions defined by the generalized Riemann equation). \end{quote}

The first memoir in the above paragraph was lost in transit from jail to the mathematical community as mentioned in \cite{web}. However, we believe that Koshliakov reproduced the work in the memoir in the three papers \cite{koshliakov6} he wrote after he was released from jail. 

The second memoir, which was written under his patronymic name (N. S. Sergeev), is the subject of discussion of our paper. I.~M.~Vinogradov, S.~Bernstein and Yu.~V.~Linnik was so impressed by this manuscript that they immediately recommended its publication. Sadly though, this manuscript has never been examined in detail since then. There is only a very brief mention of it in a PhD thesis of A.~G.~Kisunko \cite[p.~4]{kisunko} and it can also be found in some news articles on the web, for example, in \cite{webarticle}. The first author found out about it first in 2010 while reading \cite{bfikm} and kindly got a copy of it from the Center for Research Libraries in Chicago through the inter-library loan service of University of Illinois at Urbana-Champaign.

But it was not until last year that we started studying this manuscript. Having examined it detail since then has convinced us that it is indeed a masterpiece! 

In a series of papers beginning with this one, we plan to further build upon Koshliakov's theory. This theory gives as a special case the theory of the Riemann zeta function $\zeta(s)$. In this first paper, we obtain new generalized modular relations using Koshliakov's theory which result in two of Ramanujan's famous formulas as special cases. The surprising thing is that our generalized modular relations give new results even in the theory of the Riemann zeta function. These results are given in Section \ref{mr}.

\section{Contents of Koshliakov's manuscript}\label{contents}
Koshliakov's manuscript is over $150$ pages long. Even though he worked on it in 1940s while in prison, the churning of the concepts in this manuscript seems to be going on in his head right from mid-1930s as can be seen from the articles \cite{koshmathann} and \cite{koshbernoulli}. The origin of his work stems from a problem in physics arising in heat conduction \cite{koshbernoulli} which we now describe. 

Consider a sphere of radius $R_2$. Suppose the radiation occurs on its surface $r=R_2$ into a medium at the zero temperature. Suppose the initial temperature is $u=0$, the heat sources are placed on a spherical surface of radius $R_1$\hspace{1mm} $(0<R_1<R_2)$, and the rate per unit time of heat propagation from the whole surface is $Q$. Then the problem is concerned with finding the temperature of the sphere at time $t>0$. If $k$ is the thermal conductivity and $h$ is the emissivity of the surface, if the specific heat is $c$ and $\rho$ is the density of the material forming the sphere, then the relevant form of the heat equation turns out to be
\begin{align*}\label{he}
\frac{\partial v}{\partial t}&=a^2\frac{\partial^2 v}{\partial r^2}, a=\sqrt{c/(k\rho)};\nonumber\\
v|_{r=0}&=0, \left.\frac{\partial v}{\partial r}\right|_{r=R_2}+\left.\left(H-\frac{1}{R_2}\right)v\right|_{r=R_2}=0, H=h/k.
\end{align*}
Such problems are very important in the analytical theory of heat distribution. See \cite{betsoly} for a recent article on the steady state distribution of heat.

It can be easily verified by the method of separation of variables that the characteristic solution to the above system is
\begin{equation}\label{ce1}
\mu\cos\mu+p\sin\mu=0,
\end{equation}
where $p=R_2H-1$ and $\mu$ is its eigenvalue. It is this characteristic equation that forms the crux of Koshliakov's theory of transcendental functions defined by a generalization of the functional equation of the Riemann zeta function. This is now explained.

The Riemann zeta function $\zeta(s)$ is defined for Re $s>1$ by the absolutely convergent Dirichlet series $\zeta(s)=\sum_{m=1}^{\infty}m^{-s}$. It can be analytically continued to the whole complex plane except for a simple pole at $s=1$. At the heart of its theory lies the functional equation for $\zeta(s)$ obtained by Riemann himself. It is given by \cite[p.~16, Equation (2.1.13)]{titch}
\begin{equation}\label{zetafesym}
\pi^{-\frac{s}{2}}\Gamma\left(\frac{s}{2}\right)\zeta(s)=\pi^{-\frac{(1-s)}{2}}\Gamma\left(\frac{1-s}{2}\right)\zeta(1-s),\nonumber
\end{equation}
which can also be written in the following asymmetric form \cite[p.~13, Equation (2.1.1)]{titch}:
\begin{equation}\label{zetafe}
\zeta(1-s)=2(2\pi)^{-s}\G(s)\zeta(s)\cos\left(\frac{\pi s}{2}\right).
\end{equation}
In \cite{ham2}, Hamburger obtained the following characterization of the Riemann zeta function:

\textit{If the Dirichlet series
\begin{align}
f(s):=\sum_{n=1}^{\infty}\frac{a(n)}{n^s},\hspace{2mm}g(s):=\sum_{n=1}^{\infty}\frac{b(n)}{n^s},
\end{align}
are absolutely convergent for large $\textup{Re}(s)$, and if $f(s)$ is a meromorphic function of finite order with finitely many poles satisfying
\begin{align*}
\pi^{-s/2}\Gamma\left(\frac{s}{2}\right)f(s)=\pi^{-(1-s)/2}\Gamma\left(\frac{1-s}{2}\right)g(1-s),
\end{align*}
then, $f(s)=g(s)=c\zeta(s)$, where $c$ is a constant.} 

Along with \cite{ham2}, Hamburger \cite{ham1} \cite{ham4} and \cite{ham5} also studied the analytic properties of functions defined by the following generalization of \eqref{zetafe}, namely,
\begin{align}\label{hamburgerfe}
f(1-s)=\frac{2 \cos\left(\frac{\pi s}{2}\right)\Gamma(s)}{(2\pi)^s}g(s).
\end{align}
Here, $f(s)$ is a Dirichlet series whereas $g(s)$, in general, is not a Dirichlet series. However, he neither gave any concrete examples for constructing functions defined by the above series nor examined the study of special functions associated to these series in the same way that the Euler gamma function, Bernoulli numbers and polynomials are associated with $\zeta(s)$. 

The first such pair of $f$ and $g$ was given by Koshliakov in \cite{koshliakov3}.
Note that Hurwitz's formula, valid\footnote{It is also valid for Re$(s)>0$ if $0<a<1$.} for Re$(s)>1$ and $0<a\leq1$, is given by
\begin{align*}
\zeta(1-s, a)=\frac{2\cos\left(\frac{\pi s}{2}\right)\G(s)}{(2\pi)^s}\sum_{n=1}^{\infty}\frac{\left(\cos(2\pi na)+\tan\left(\frac{\pi s}{2}\right)\sin(2\pi na)\right)}{n^s}.
\end{align*}
However, even though the left-hand side in this formula is a Dirichlet series and the right-hand side is not, it is not a valid example as the above relation is not true for \emph{all} complex values of $s$.

For $\textup{Re}(s)>1$, the $f$ in \eqref{hamburgerfe} considered by Koshliakov \cite{koshliakov3} is a more general Dirichlet series than the usual $\sum_{n=1}^{\infty}a(n)n^{-s}$, namely,
\begin{align}\label{zps}
\zeta_p(s):=\sum_{j=1}^{\infty}\frac{p^2+\lambda^2_{j}}{p\left(p+\frac{1}{\pi}\right)+\lambda^2_j}.\frac{1}{\lambda^s_j},
\end{align}
where  $\lambda_1, \lambda_2, ...$ are  the positive roots of the equation
\begin{align}\label{ce2}
p \sin(\pi \lambda)+\lambda \cos(\pi \lambda)=0 \hspace{10mm}(p>0).
\end{align}
Note that the equation \eqref{ce2} is the same as in \eqref{ce1} as can be seen by replacing in \eqref{ce1} $\mu$ and $p$ by $\pi\l$ and $\pi p$ respectively. In his manuscript \cite[p.~15-16, Chapter 1]{koshliakov3}, he proves the absolute and uniform convergence of the series in \eqref{zps} for $\textup{Re}(s)>1$, whence it is seen that $\zeta_p(s)$ is analytic for Re$(s)>1$.

With the choice of $f$ in \eqref{hamburgerfe} being $\zeta_p(s)$ from \eqref{zps}, Koshliakov gets $g$ to be the following series:
\begin{equation}\label{eps}
\eta_{p}(s):= \sum_{k=1}^{\infty}\frac{(s,2 \pi p k)_k}{k^s}\hspace{5mm}(\textup{Re}(s)>1),
\end{equation}
where
\begin{equation}\label{s}
(s,\nu k)_k:=\frac{1}{\Gamma(s)}\int_{0}^{\infty}e^{-x}\left(\frac{k\nu-x}{k\nu+x}\right)^kx^{s-1}\, dx.
\end{equation}
The derivation for the same is given in Sections 2 and 3 of Chapter 1 of \cite[p.~15-20]{koshliakov3}. On page 20 of his manuscript \cite{koshliakov3}, Koshliakov discusses the absolute and uniform convergence of the series defining $\eta_p(s)$ in Re$(s)>1$, which is easily seen since $\frac{2\pi pk-x}{2\pi pk+x}<1$.
Hence $\eta_p(s)$ is also an analytic function of $s$ in Re$(s)>1$.

We call the functions $\zeta_p(s)$ and $\eta_p(s)$ defined in \eqref{zps} and \eqref{eps} as the \emph{Koshliakov zeta functions}\footnote{Even though the series definition of $\eta_p(s)$ is not a Dirichlet series, we call $\eta_p(s)$ a zeta function since it will be always studied in conjunction with $\zeta_p(s)$ which is, indeed, a Dirichlet series.}. They both reduce to the Riemann zeta function $\zeta(s)$ when we let $p\to\infty$. This can be seen as follows. From \eqref{ce2}, we have
\begin{align}
\sin(\pi \lambda)+\frac{\lambda}{p} \cos(\pi \lambda)=0.\nonumber
\end{align}
Hence when $p\to\infty$, the roots of the above equation simply turn out to be positive integers, that is, $\l_j\to j$, so that 
\begin{equation}\label{zpinf}
\lim_{p\to\infty}\zeta_{p}(s)=\zeta(s).
\end{equation}
Also, from \eqref{s}, $\lim_{p\to\infty}(s, 2\pi pk)_k=1$, whence
\begin{equation}
\lim_{p\to\infty}\eta_{p}(s)=\zeta(s).\nonumber
\end{equation}

Another important special case of the Koshliakov zeta functions arises when we let $p\to 0$. From \eqref{ce2}, it is clear that $\l_j\to j-1/2, j\geq1$. Then, by elementary arguments,
\begin{equation}\label{zpzero}
\lim_{p\to0}\zeta_{p}(s)=(2^s-1)\zeta(s).
\end{equation}
Moreover, from \eqref{s}, $(s, 0)_k=(-1)^k$, and hence
\begin{equation}\label{epzero}
\lim_{p\to0}\eta_{p}(s)=(2^{1-s}-1)\zeta(s).
\end{equation}
Let $0<\a<\l_1$, where $\l_1$ is the smallest positive root of \eqref{ce2}. Then Koshliakov obtains the analytic continuation of $\zeta_p(s)$ in the entire complex plane except for a simple pole at $s=1$ with residue $1$ by obtaining the representation \cite[p.~17, Chapter 1, Equation (16)]{koshliakov3}
\begin{align}
\zeta_{p}(s)=\frac{\a^{1-s}}{s-1}+\int_{\a}^{\a-i \infty}\frac{z^{-s}}{\sigma(iz)e^{2\pi iz}-1}dz+\int_{\a}^{\a+i \infty}\frac{z^{-s}}{\sigma(-iz)e^{-2\pi iz}-1}dz,\nonumber
\end{align}
where
\begin{align}\label{sigmap}
\sigma(z)=\frac{p+z}{p-z}.
\end{align}
The other zeta function of Koshliakov, namely, $\eta_p(s)$, can also be analytically continued in the whole $s$-complex plane except for a simple pole at $s=1$ with residue $\frac{1}{1+1/(\pi p)}$ as can be seen in \cite[p.~21-22, Chapter 1]{koshliakov3}.

The functional equation that the Koshliakov zeta functions satisfy is \cite[p.~20, Chapter 1, Equation (30)]{koshliakov3}
\begin{align}\label{2.30}
\zeta_{p}(1-s)=\frac{2 \cos\left(\frac{\pi s}{2}\right)\Gamma(s)}{(2\pi)^s}\eta_{p}(s),
\end{align}
which, according to \eqref{zpinf}-\eqref{epzero}, reduces to \eqref{zetafe} when we let $p\to\infty$ or $p\to0$.

Having considered the setup in \eqref{zps}-\eqref{s}, Koshliakov proceeds to construct the complete theory of his zeta functions and that of the functions associated to them, that is, there are generalized gamma functions (which he calls \emph{gammamorphic functions}), generalized Euler constants, generalized Bernoulli numbers and polynomials, generalized theta function, generalized Hurwitz zeta functions, to name a few. There is also a chapter fully devoted to summation formulas where he obtains generalized Abel-Plana summation formula and generalized Poisson summation formula. The last chapter of the manuscript is based on generalizing some beautiful modular relations of Ramanujan \cite{riemann} and Hardy \cite{ghh}. The definitions and locations of some of these generalizations by Koshliakov in his manuscript \cite{koshliakov3} along with the classical functions as special cases are given in a table after Section \ref{cr}.

In this paper, we concentrate on two different kinds of modular relations resulting by developing further the theory of Koshliakov zeta functions. The first kind of modular relations which we study generalizes Ramanujan's famous formula for odd zeta values. The second kind is concerned with an integral containing in its integrand the Riemann's function $\Xi(t)$ (see \eqref{xiXi} below) and its generalization $\Xi_p(t)$ given in \eqref{10.10}. Koshliakov studies some such relations in the last chapter of his manuscript \cite{koshliakov3}, however, the associated integrals in his formulas always contain a single $\Xi_p(t)$ in their integrands whereas our modular relation contains the product $\Xi_p(t)\Xi(t)$.

As shown above, our results which are true for any positive real number $p$ give, as corollaries, not only the corresponding well-known results in the theory of the Riemann zeta function when we let $p\to\infty$, but also new results when we let $p\to 0$.

\section{Some important functions and results in Koshliakov's manuscript}\label{prelim}
As mentioned at the end of Section \ref{contents}, Koshliakov studied, among other things, two generalizations of the Euler Gamma function in  his manuscript \cite[Equation (5.1), (9.1)]{koshliakov3}. He denotes these two functions by $\Gamma_{1,p}(x)$ and $\Gamma_{2,p}(x)$, and calls them the \emph{Gammamorphic functions of the first and second kind} respectively. Chapters 4 and 8 of his manuscript \cite{koshliakov3} are devoted to the study of these two functions.

The Gammamorphic function of the first kind is defined by \cite[p.~66, Chapter 4, Equation (1)]{koshliakov3}
\begin{align}
\Gamma_{1,p}(x)&:=\frac{e^{-C_{p}^{(1)}x}}{x}\prod_{j=1}^{\infty}\left\{ \frac{e^{\frac{x}{\l_j}}}{1+\frac{x}{\l_j}}\right\}^{\frac{p^2 + \l_{j}^2}{p\left(p+\frac{1}{\pi}\right)+\l^2_{j}}},\nonumber
\end{align}
where \cite[p.~46, Chapter 2, Equation (46)]{koshliakov3}
\begin{align}\label{gamma1}
C^{(1)}_{p}:=\lim_{n\to \infty}\left\{ \sum_{j=1}^{n-1}\frac{p^2+\lambda^2_{j}}{\left(p\left(p+\frac{1}{\pi}\right)+\lambda^2_j\right)} \frac{1}{\lambda_j}-\log\lambda_n\right\}
\end{align}
is Koshliakov's first generalization of the Euler constant $\g$.

The existence of the limit in \eqref{gamma1} is not explicitly given in \cite{koshliakov3}. Hence we give brief details of the same. Observe that employing the elementary evaluation $\int_{0}^{\infty}e^{-\l_j t}\, dt=1/\l_j$ and the Frullani's integral $\int_{0}^{\infty}t^{-1}\left(e^{-t}-e^{-\l_n t}\right)\, dt=\log\l_n$ in the first step below and using Lebesgue's dominated convergence theorem in the third step, we get
\begin{align*}
&\lim_{n\to \infty}\left\{ \sum_{j=1}^{n-1}\frac{p^2+\lambda^2_{j}}{\left(p\left(p+\frac{1}{\pi}\right)+\lambda^2_j\right)}\frac{1}{\l_j}-\log\lambda_n\right\}\nonumber\\
&=\lim_{n\to \infty}\left\{ \sum_{j=1}^{n-1}\frac{p^2+\lambda^2_{j}}{\left(p\left(p+\frac{1}{\pi}\right)+\lambda^2_j\right)}\int_{0}^{\infty}e^{-\l_j t}\, dt-\int_{0}^{\infty}\frac{e^{-t}-e^{-\l_n t}}{t}\, dt\right\}\nonumber\\
&=\lim_{n\to \infty}\int_{0}^{\infty}\left\{\sum_{j=1}^{n-1}\frac{p^2+\lambda^2_{j}}{\left(p\left(p+\frac{1}{\pi}\right)+\lambda^2_j\right)}e^{-\l_j t}-\frac{
(e^{-t}-e^{-\l_n t})}{t}\right\}\, dt\nonumber\\
&=\int_{0}^{\infty}\left\{\sum_{j=1}^{\infty}\frac{p^2+\lambda^2_{j}}{\left(p\left(p+\frac{1}{\pi}\right)+\lambda^2_j\right)}e^{-\l_j t}-\frac{
e^{-t}}{t}\right\}\, dt\nonumber\\
&=\int_{0}^{\infty}\left(\sigma_p(t)-\frac{e^{-t}}{t}\right)\, dt,
\end{align*}
where for $\textup{Re}(z)>0$, $\s_p(z)$ is defined by \cite[p.~44, Chapter 2, Equation (33)]{koshliakov3}
\begin{align}\label{3.33}
\s_p(z):=\sum_{j=1}^{\infty}\frac{p^2+\lambda_j^2}{p\left(p+\frac{1}{\pi}\right)+\lambda_j^2}e^{-\l_j z},
\end{align}
which, when $p\to\infty$, reduces to $1/(e^z-1)$. Now the fact that the integral $\int_{0}^{\infty}\left(\sigma_p(t)-\frac{e^{-t}}{t}\right)\, dt$ converges is easy to see since from \cite[p.~43, Chapter 2, Equation (30)]{koshliakov3}, we have
\begin{equation}
x\sigma_p(x)=1+\sum_{n=1}^{\infty}\frac{\phi_{n, p}(0)x^{n}}{n!},\nonumber
\end{equation}
where $\phi_{n, p}(0)$ are Koshliakov's generalized Bernoulli numbers. If we take $p \to \infty$, $\Gamma_{1,p}(x)$ and $C^{(1)}_{p}$ reduce to $\Gamma(s)$ and $\gamma$ respectively.

Koshliakov studied $\Gamma_{1,p}(x)$ in detail in Chapter 4 of his manuscript and derived several properties of it analogous to those of $\Gamma(x)$. The logarithmic derivative of  $\Gamma_{1,p}(x)$ is given by \cite[p.~71, Chapter 4]{koshliakov3}
\begin{align}\label{Digamma0}
\psi_{1,p}(x):=\frac{\Gamma'_{1,p}(x)}{\Gamma_{1,p}(x)}= -C_{p}^{(1)}-\frac{1}{x}+\sum_{j=1}^{\infty}\frac{p^2+\l_j^2}{p\left(p+\frac{1}{\pi} \right)+\l_j^2}\left(\frac{1}{\l_j} -\frac{1}{x+\l_j}\right).
\end{align}
In \cite[p.~71, Chapter 4, Equation (14)]{koshliakov3} he gave the following representation for it, namely,
\begin{align}\label{Digamma1}
\psi_{1,p}(x)=\left( \frac{1}{2}\frac{1}{\left(1+\frac{1}{\pi p}\right)} -1\right)\frac{1}{x}+\log x - I_1(x),
\end{align}
where $I_1(x)$ is given by \cite[p.~71, Chapter 4, Equation (15)]{koshliakov3}
\begin{align}\label{intDigamma11}
I_1(x)=\int_{0}^{\infty}\left\{\s_p(t)-\frac{1}{t}+\frac{1}{2}\frac{1}{\left(1+\frac{1}{\pi p}\right)} \right\}e^{-xt}dt,
\end{align}
as well as by \cite[p.~71, Chapter 4, Equation (16)]{koshliakov3}
\begin{align}\label{intDigamma12}
I_1(x) =2\int_{0}^{\infty}\frac{t\, dt}{(t^2+x^2)\left(\s(t)e^{2\pi t}-1\right)}.
\end{align}

The Gammamorphic function of the second kind is defined by \cite[p.~121, Chapter 8, Equation (1)]{koshliakov3}
\begin{align}
\Gamma_{2,p}(x) :=\frac{e^{-C^{(2)}_p\cdot x}}{x}\prod_{k=1}^{\infty}\frac{\exp\left(\frac{x(1,2\pi pk)_k}{k}+D_s \left\{(s,2\pi p(x+k))_k-(s,2\pi pk)_k  \right\}_{s=0} \right)}{1+\frac{x}{k}},\nonumber
\end{align}
where \cite[p.~46, Chapter 2, Equation (47)]{koshliakov3}
\begin{align}\label{gamma2}
C_p^{(2)}:=\lim_{n \to +\infty}\left\{ \sum_{k=1}^{n-1}\frac{(1,2\pi pk)_k}{k}-\frac{1}{\left(1+\frac{1}{\pi p}\right)}\log n\right\}
\end{align}
is another generalization of Euler's constant whose existence can be shown in a manner similar to that shown above for $C_{p}^{(1)}$.

In Chapter 8, Koshliakov studied the logarithmic derivative of $\Gamma_{2,p}(x)$ given by \cite[p.~124, Chapter 8, Equation (13)]{koshliakov3}
\begin{align}\label{9.13}
\psi_{2,p}(x):=\frac{\Gamma'_{2,p}(x)}{\Gamma_{2,p}(x)}=-C_p^{(2)}-\frac{1}{x}+\sum_{k=1}^{\infty}\left\{\frac{(1,2\pi pk)_k}{k} -\frac{(1,2\pi p(x+k))_k}{x+k} \right\},
\end{align}
and also gave the representation \cite[p.~124, Chapter 8, Equation (10)]{koshliakov3}: 
\begin{align}\label{Digamma2}
\psi_{2,p}(x) =-\frac{2e^{2\pi p}}{1+\frac{1}{\pi p}}Q_{2\pi p}(0)-\frac{1}{2x}+\frac{1}{\left(1+\frac{1}{\pi p}\right)}\log x -I_1^*(x),
\end{align}
where for $R (x)> 0$ the term\footnote{Koshliakov uses the notation $I_1(x)$, however, we have changed it to $I_{1}^{*}(x)$ to avoid any confusion with $I_1(x)$ defined in \eqref{intDigamma11} or \eqref{intDigamma12}.} $I_1^{*} (x)$ can be given by one of the following forms \cite[p.~124, Chapter 8, Equations (11), (12)]{koshliakov3}:
\begin{align}
I_{1}^*(x) &=\int_{0}^{\infty}\left\{ \frac{1}{\s\left(\frac{t}{2\pi} \right)e^t -1}-\frac{1}{\left(1+\frac{1}{\pi p}\right)}\frac{1}{t}+\frac{1}{2}\right\}e^{-xt}dt,\nonumber\\
I_1^*(x) &=2\int_{0}^{\infty}\frac{t}{t^2+x^2}\s_p(2\pi t)dt, \label{intDigamma22}
\end{align}
and 
\begin{align}\label{2.49}
Q_{\mu}(s)=\int_{\mu}^{\infty}e^{-t}t^{s-1}dt, \quad \mu >0,
\end{align}
is one of the two incomplete gamma functions, more commonly denoted in the contemporary literature by $\G(s,\mu)$.
%
%

We now give two modular relations of Koshliakov involving functions defined in this and the earlier section. The first one \cite[p.~150, Chapter 9, Equation (19)]{koshliakov3} given below generalizes a result of Ramanujan \cite[Equation (14)]{riemann}:

\textit{Let $ab=\pi$. Then
\begin{align}\label{10.19}
&\sqrt{a^3}\int_{0}^{\infty}xe^{-a^2x^2}\left\{\frac{1}{\s(x)e^{2\pi x}-1}+\s_p(2\pi x)-\frac{1}{2\pi}\left(1+\frac{1}{1+\frac{1}{\pi p}} \right)\frac{1}{x}\right\}\, dx\nonumber \\
&=\sqrt{b^3}\int_{0}^{\infty}xe^{-b^2x^2}\left\{\frac{1}{\s(x)e^{2\pi x}-1}+\s_p(2\pi x)-\frac{1}{2\pi}\left(1+\frac{1}{1+\frac{1}{\pi p}} \right)\frac{1}{x}\right\}\, dx\nonumber\\
&=\frac{-1}{8}\pi^{-\frac{7}{4}}\int_{0}^{\infty}\Xi_p\left(\frac{t}{2}\right)\left|\Gamma\left(-\frac{1}{4}+\frac{it}{4} \right) \right|^2\cos\left(\frac{1}{2}t\log\left(\frac{\sqrt{\pi}}{a}\right)\right)\, dt .
\end{align}}

The other modular relation \cite[p.~152--153, Chapter 9, Equations (26), (27)]{koshliakov3} is a generalization of a formula of Hardy (see \cite[Equation (1.15)]{dixthet}, \cite[Equations (14), (20)]{koshliakov5}):

\textit{Let $ab=\pi$. Then
\begin{align}\label{10.27}
&\sqrt{a}\int_{0}^{\infty}e^{-a^2x^2}\left\{\frac{\Gamma'_{1,p}(x)}{\Gamma_{1,p}(x)}+ \frac{\Gamma'_{2,p}(x)}{\Gamma_{2,p}(x)}+\frac{2e^{2\pi p}Q_{2\pi p}(0)}{1+\frac{1}{\pi p}}+\frac{2}{x}-\left(1+\frac{1}{1+\frac{1}{\pi p}}\right)\log x\right\}\, dx\nonumber \\
&=\sqrt{b}\int_{0}^{\infty}e^{-b^2x^2}\left\{\frac{\Gamma'_{1,p}(x)}{\Gamma_{1,p}(x)}+ \frac{\Gamma'_{2,p}(x)}{\Gamma_{2,p}(x)}+\frac{2e^{2\pi p}Q_{2\pi p}(0)}{1+\frac{1}{\pi p}}+\frac{2}{x}-\left(1+\frac{1}{1+\frac{1}{\pi p}}\right)\log x\right\}\, dx\nonumber\\
&=4\pi^{\frac{1}{4}}\int_{0}^{\infty}\frac{\Xi_p\left( \frac{t}{2}\right)}{t^2+1}\frac{\cos\left(\frac{1}{2}t\log\left(\frac{\sqrt{\pi}}{a}\right)\right)}{\cosh\frac{\pi t}{2}}\, dt.
\end{align}
}

\section{New results in the theory of Koshliakov zeta functions}\label{mr}

Euler's famous formula for $\zeta(2m), m\in\mathbb{N},$ is given by \cite[p.~5, Equation (1.14)]{temme}
\begin{equation}\label{ef}
\zeta(2 m ) = (-1)^{m +1} \frac{(2\pi)^{2 m}B_{2 m }}{2 (2 m)!}.
\end{equation}
Let $\s_p(z)$ be defined for $\textup{Re}(z)>0$ by \eqref{3.33}. For $k\in\mathbb{N}$, Koshliakov defines his generalized Bernoulli numbers\footnote{We note that in Koshliakov's notation, $B_{2k}^{(p)}$ would be denoted by $(-1)^{k+1}B_k^{(p)}$. We have followed the contemporary notation for Bernoulli numbers. It is easy to see that $\lim_{p\to\infty}B_{2k}^{(p)}=B_{2k}$.} by \cite[p.~46, Chapter 2, Equation~(45)]{koshliakov3}
\begin{align}\label{genber}
B_{2k}^{(p)} :=(-1)^{k+1}4k\int_{0}^{\infty}x^{2k-1}\s_p(2\pi x)dx,\hspace{1mm} B_{0}^{(p)}:=\frac{1}{1+\frac{1}{\pi p}}.
\end{align}
He then obtains a generalization of \eqref{ef}, namely \cite[Chapter 1, Equation (38)]{koshliakov3},
\begin{align}
\zeta_{p}(2m)=\frac{(-1)^{m+1}(2\pi)^{2m}}{2(2m)!}B_{2m}^{(p)}.\nonumber
\end{align}
The special values of the Riemann zeta function at even positive integers, that is, $\zeta(2m)$, are transcendental as can be seen from \eqref{ef}. However, very little is known about the arithmetic nature of the odd zeta values $\zeta(2m+1)$. Ap\'{e}ry \cite{apery1}, \cite{apery2} showed that $\zeta(3)$ is irrational. Zudilin \cite{zudilin} showed that one of $\zeta(5), \zeta(7), \zeta(9)$ and $\zeta(11)$ is irrational. There are further results, for example, \cite{ballrivoal}, eliciting the arithmetic nature of $\zeta(2m+1)$.

A celebrated formula for $\zeta(2m+1)$ is that of Ramanujan \cite[p.~173, Ch. 14, Entry 21(i)]{ramnote}, \cite[pp.~319-320, formula (28)]{lnb}, \cite[pp.~275-276]{bcbramsecnote}.  For $\a, \b>0$ with $\a\b=\pi^2$ and $m\in\mathbb{Z}, m\neq 0$, it is given by
\begin{align}\label{zetaodd}
\a^{-m}\left\{\frac{1}{2}\zeta(2m+1)+\sum_{n=1}^{\infty}\frac{n^{-2m-1}}{e^{2\a n}-1}\right\}&=(-\b)^{-m}\left\{\frac{1}{2}\zeta(2m+1)+\sum_{n=1}^{\infty}\frac{n^{-2m-1}}{e^{2\b n}-1}\right\}\nonumber\\
&\quad-2^{2m}\sum_{j=0}^{m+1}\frac{(-1)^jB_{2j}B_{2m+2-2j}}{(2j)!(2m+2-2j)!}\a^{m+1-j}\b^j,
\end{align}
where for $j\geq 0$, $B_{j}:=B_j(0)$ is the $j^{\textup{th}}$ Bernoulli number and $B_{j}(a)$ is the $j^{\textup{th}}$ Bernoulli polynomial defined by
\begin{equation*}
\sum_{j=0}^{\infty}\frac{B_j(a) z^j}{j!}=\frac{ze^{az}}{e^{z}-1} \hspace{5mm}(0<a\leq 1, |z|<2\pi). 
\end{equation*}
This formula has a rich history for which we refer the reader to \cite{berndtstraubzeta}. Two new generalizations of \eqref{zetaodd} were recently given in \cite{dgkm} and \cite{dixitmaji1}. 

\subsection{Transformations concerning $\zeta_p(s)$ at odd integer arguments}

Our first result, which is a Ramanujan-type formula for the Koshliakov zeta function $\zeta_p(s)$, is given in the following theorem.
\begin{theorem}\label{Ramanujantype1}
Let $\zeta_p(s)$ be defined in \eqref{zps} and let $\s(z)$ be defined by \eqref{sigmap}. Let $B_{2j}^{(p)}$ be defined in \eqref{genber}. Then for $m \in \mathbb{Z},~m\neq 0,$ and $\a \b =\pi^2$,
\begin{align}\label{rt1eqn}
&\a^{-m}\left\{\frac{1}{2}\zeta_{p}(2m+1)+\sum_{j=1}^{\infty}\frac{p^2+\l_j^2}{p\left(p+\frac{1}{\pi}\right)+\l_j^2}\cdot\frac{\l_{j}^{-2m-1}}{\s\left(\frac{\l_j \a}{\pi} \right)e^{2\a \l_j}-1}\right\}\nonumber\\
&=(-\b)^{-m}\left\{\frac{1}{2}\zeta_{p}(2m+1)+\sum_{j=1}^{\infty}\frac{p^2+\l_j^2}{p\left(p+\frac{1}{\pi}\right)+\l_j^2}\cdot\frac{\l_{j}^{-2m-1}}{\s\left(\frac{\l_j \b}{\pi} \right)e^{2\b \l_j}-1}\right\}\nonumber\\
&\quad -2^{2m}\sum_{j=0}^{m+1}\frac{(-1)^jB_{2j}^{(p)}B_{2m-2j+2}^{(p)}}{(2j)!(2m-2j+2)!}\a^{m-j+1}\b^j.
\end{align}
\end{theorem}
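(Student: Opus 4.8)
The plan is to establish \eqref{rt1eqn} by a contour‑integration argument that lifts the classical Mellin‑transform proof of Ramanujan's formula \eqref{zetaodd} to Koshliakov's setting. The one genuinely new analytic ingredient is a \emph{Mellin‑transform evaluation}: for $\operatorname{Re}(w)>1$,
\begin{equation*}
\int_{0}^{\infty}\frac{t^{w-1}}{\s(t)e^{2\pi t}-1}\,dt=\frac{\Gamma(w)\,\eta_{p}(w)}{(2\pi)^{w}},
\end{equation*}
the right‑hand side continuing meromorphically to $\mathbb{C}$ with only a simple pole at $w=1$, of residue $B_{0}^{(p)}/(2\pi)$. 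I would derive this from the equality of the two representations \eqref{intDigamma11} and \eqref{intDigamma12} of $I_{1}(x)$ — which, via Laplace inversion, encodes a sine‑transform relation between $\s_{p}$ and $\bigl(\s(t)e^{2\pi t}-1\bigr)^{-1}$ — together with the elementary identity $\Gamma(s)\zeta_{p}(s)=\int_{0}^{\infty}x^{s-1}\s_{p}(x)\,dx$; alternatively it follows from Koshliakov's contour‑integral continuation of $\zeta_{p}(s)$ in \cite{koshliakov3} combined with \eqref{2.30}. Either way, one may equivalently write the integral as $\zeta_{p}(1-w)\big/\bigl(2\cos(\tfrac{\pi w}{2})\bigr)$.

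Granting this, put $a_{j}:=\dfrac{p^{2}+\lambda_{j}^{2}}{p(p+\frac{1}{\pi})+\lambda_{j}^{2}}$ and
\begin{equation*}
S(\alpha):=\sum_{j=1}^{\infty}a_{j}\,\frac{\lambda_{j}^{-2m-1}}{\s(\lambda_{j}\alpha/\pi)e^{2\alpha\lambda_{j}}-1}.
\end{equation*}
Substituting $t=\lambda_{j}\alpha/\pi$ in the lemma, applying Mellin inversion, and interchanging the summation with the integration (legitimate since $\Gamma$ decays exponentially along vertical lines and $\zeta_{p}$ is an absolutely convergent Dirichlet series in the relevant half‑plane) gives, for $c>\max(1,-2m)$,
\begin{equation*}
S(\alpha)=\frac{1}{2\pi i}\int_{(c)}\frac{\Gamma(w)\,\eta_{p}(w)\,\zeta_{p}(w+2m+1)}{(2\alpha)^{w}}\,dw .
\end{equation*}
It is precisely the pairing $\eta_{p}(w)\,\zeta_{p}(w+2m+1)$ — rather than $\zeta_{p}(w)\,\zeta_{p}(w+2m+1)$ — that makes the integrand self‑reciprocal under \eqref{2.30}, which is why the theorem is phrased in terms of $\s$ from \eqref{sigmap}. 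I would then move the line of integration from $\operatorname{Re}(w)=c$ to $\operatorname{Re}(w)=c'$ with $c'<-(2m+1)$ for $m\ge1$ (the case $m\le-1$ being entirely analogous, with fewer residues, matching the degeneration of the Bernoulli sum in \eqref{rt1eqn}), the horizontal segments vanishing in the limit by Stirling‑type bounds for $\Gamma$ together with polynomial vertical‑strip bounds for $\eta_{p}$ and $\zeta_{p}$. On the shifted line I apply \eqref{2.30} in the forms $\eta_{p}(w)=\dfrac{(2\pi)^{w}\zeta_{p}(1-w)}{2\cos(\frac{\pi w}{2})\Gamma(w)}$ and $\zeta_{p}(w+2m+1)=2(-1)^{m}\cos(\tfrac{\pi w}{2})\,\Gamma(-w-2m)\,(2\pi)^{w+2m}\,\eta_{p}(-w-2m)$; after the factors $\cos(\tfrac{\pi w}{2})$ and $\Gamma(w)$ cancel and one substitutes $w\mapsto-v-2m$, the relation $\alpha\beta=\pi^{2}$ turns the residual integral into $(-1)^{m}(\alpha/\pi)^{2m}$ times the same integral with $\alpha$ replaced by $\beta$, i.e.\ into $(-1)^{m}(\alpha/\pi)^{2m}\,S(\beta)$.

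It remains to evaluate the residues crossed, at $w=1$, $w=0$, $w=-2m$, and $w=-(2k-1)$ for $k=1,\dots,m+1$; all other poles of $\Gamma$ are cancelled by the trivial zeros of $\eta_{p}$ at the negative even integers, the one exception being $w=-2m$, where the simple pole of $\zeta_{p}(w+2m+1)$ restores a net simple pole. The computation rests on the following consequences of \eqref{2.30} and of Koshliakov's evaluation $\zeta_{p}(2k)=\frac{(-1)^{k+1}(2\pi)^{2k}}{2(2k)!}B_{2k}^{(p)}$ (which, taken at $k=0$, reads $\zeta_{p}(0)=-\tfrac12B_{0}^{(p)}$): $\operatorname{Res}_{w=1}\eta_{p}(w)=B_{0}^{(p)}$, $\eta_{p}(0)=-\tfrac12$, $\eta_{p}(1-2k)=-\tfrac{1}{2k}B_{2k}^{(p)}$, and $\eta_{p}'(-2m)=\dfrac{(-1)^{m}(2m)!}{2(2\pi)^{2m}}\,\zeta_{p}(2m+1)$. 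One then finds that $w=0$ contributes $-\tfrac12\zeta_{p}(2m+1)$; $w=-2m$ contributes $\dfrac{(-1)^{m}\alpha^{2m}}{2\pi^{2m}}\,\zeta_{p}(2m+1)$, which after multiplication by $\alpha^{-m}$ becomes $\tfrac12(-\beta)^{-m}\zeta_{p}(2m+1)$; $w=1$ yields the $j=m+1$ summand of the Bernoulli double sum in \eqref{rt1eqn}; and $w=-(2k-1)$ yields its $j=m-k+1$ summand. Feeding these into $S(\alpha)=(\text{sum of residues})+(-1)^{m}(\alpha/\pi)^{2m}S(\beta)$, multiplying through by $\alpha^{-m}$, and using $\alpha^{-m}(\alpha/\pi)^{2m}=\beta^{-m}$ (so that $(-1)^{m}\alpha^{-m}(\alpha/\pi)^{2m}=(-\beta)^{-m}$), one recovers \eqref{rt1eqn} after trivial rearrangement. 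I expect the main obstacles to be the Mellin‑transform evaluation itself — the sole new analytic input, whose continuation and pole must be pinned down with care — and the vertical‑strip growth bounds for $\eta_{p}$ and $\zeta_{p}$ needed to justify the contour shift and the termwise integration, since these are not made explicit in Koshliakov's manuscript; the residue bookkeeping, though lengthy, is then routine.
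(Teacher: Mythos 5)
Your proposal is correct and follows essentially the same route as the paper: the Mellin kernel $\Gamma(w)\eta_p(w)(2\pi)^{-w}$ you use is, by the functional equation \eqref{2.30}, exactly the paper's $\zeta_p(1-w)/\bigl(2\cos(\tfrac{\pi w}{2})\bigr)$, and the contour shift, the poles at $w=0,1,-2m,-(2k-1)$, the residue values, and the reflection $w\mapsto -w-2m$ producing $(-1)^m(\a/\pi)^{2m}S(\b)$ all match the paper's argument. The only cosmetic difference is that you evaluate the residues through special values of $\eta_p$ while the paper works directly with $\zeta_p(0)$, $\zeta_p(2j)$ and the generalized Euler formula $\zeta_p(2j)=\frac{(-1)^{j+1}(2\pi)^{2j}}{2(2j)!}B_{2j}^{(p)}$.
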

When $p\to\infty$, Theorem \ref{Ramanujantype1} reduces to Ramanujan's formula \eqref{zetaodd}. 

Even though there are several generalizations of \eqref{zetaodd} in the literature, to the best of our knowledge, there is no generalization known for the Hurwitz zeta function, defined for Re$(s)>1$ and $0<a\leq1$ by \cite[p.~36]{titch}
\begin{equation*}
\zeta(s, a)=\sum_{n=0}^{\infty}\frac{1}{(n+a)^{s}}.
\end{equation*}
When $p\to0$ in Theorem \ref{Ramanujantype1}, we get a Ramanujan-type formula for the Hurwitz zeta function $\zeta(2m+1,1/2)$.
\begin{corollary}\label{hurwitzhalf}
For $m \in \mathbb{Z},~m\neq 0$ and $\a \b =\pi^2$, we have
\begin{align*}
&\a ^{-m}\left\{\frac{1}{2}\zeta\left(2m+1,\frac{1}{2}\right)-2^{2m+1}\sum_{j=1}^{\infty}\frac{(2j-1)^{-2m-1}}{e^{(2j-1)\a }+1}\right\}\nonumber\\
&=(-\b)^{-m}\left\{\frac{1}{2}\zeta\left(2m+1,\frac{1}{2}\right)-2^{2m+1}\sum_{j=1}^{\infty}\frac{(2j-1)^{-2m-1}}{e^{(2j-1)\b }+1} \right\}\\
&\quad-2^{2m}\sum_{j=1}^{m}\frac{(-1)^jB_{2j}B_{2m-2j+2}}{(2j)!(2m-2j+2)!}(2^{2j}-1)(2^{2m-2j+2}-1)\a^{m-j+1}\b^j.
\end{align*}
\end{corollary}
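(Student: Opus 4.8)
The plan is to derive Corollary \ref{hurwitzhalf} by letting $p\to 0$ in Theorem \ref{Ramanujantype1} and identifying the limit of each ingredient. First I would record the necessary limits. Writing the characteristic equation \eqref{ce2} as $\tan(\pi\l)=-\l/p$, one sees that for every $p>0$ there is exactly one positive root in each interval $\left(j-\tfrac12,\,j\right)$, and that $\l_j\to j-\tfrac12$ from above as $p\to 0$, as already noted before \eqref{zpzero}. Hence, for each fixed $j$, the weight $\dfrac{p^2+\l_j^2}{p\left(p+\frac1\pi\right)+\l_j^2}\to 1$ while $\s(z)=\dfrac{p+z}{p-z}\to -1$, so that
\begin{equation*}
\s\!\left(\frac{\l_j\a}{\pi}\right)e^{2\a\l_j}-1\ \longrightarrow\ -\left(e^{(2j-1)\a}+1\right);
\end{equation*}
combined with $\left(j-\tfrac12\right)^{-2m-1}=2^{2m+1}(2j-1)^{-2m-1}$ this makes the $j$th term of the series on the left of \eqref{rt1eqn} tend to $-\,2^{2m+1}\dfrac{(2j-1)^{-2m-1}}{e^{(2j-1)\a}+1}$, and likewise with $\b$ in place of $\a$. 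For the leading term, \eqref{zpzero} gives $\zeta_p(2m+1)\to(2^{2m+1}-1)\zeta(2m+1)$, and the elementary identity $\zeta\!\left(s,\tfrac12\right)=2^{s}\sum_{n\ge 0}(2n+1)^{-s}=(2^{s}-1)\zeta(s)$ shows that this limit is $\zeta\!\left(2m+1,\tfrac12\right)$.

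Next I would treat the Bernoulli contribution. Comparing Koshliakov's evaluation $\zeta_p(2k)=\dfrac{(-1)^{k+1}(2\pi)^{2k}}{2(2k)!}B_{2k}^{(p)}$ with \eqref{zpzero} and Euler's formula \eqref{ef} yields $\lim_{p\to 0}B_{2k}^{(p)}=(2^{2k}-1)B_{2k}$ for $k\ge 1$; and from \eqref{genber}, $B_0^{(p)}=\left(1+\tfrac1{\pi p}\right)^{-1}\to 0=(2^{0}-1)B_0$, so $\lim_{p\to 0}B_{2k}^{(p)}=(2^{2k}-1)B_{2k}$ for all $k\ge 0$. Therefore the finite sum in \eqref{rt1eqn} converges to
\begin{equation*}
\sum_{j=0}^{m+1}\frac{(-1)^{j}\left(2^{2j}-1\right)\left(2^{2m-2j+2}-1\right)B_{2j}B_{2m-2j+2}}{(2j)!\,(2m-2j+2)!}\,\a^{m-j+1}\b^{j},
\end{equation*}
in which the terms $j=0$ and $j=m+1$ drop out since the respective factors $2^{0}-1$ vanish; what is left is precisely the sum $\sum_{j=1}^{m}$ of Corollary \ref{hurwitzhalf}. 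Putting the three limits together (the prefactors $\a^{-m}$, $(-\b)^{-m}$ and the overall $2^{2m}$ being unaffected) gives the asserted formula.

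The one step needing genuine justification, and therefore the main obstacle, is the interchange of the limit $p\to 0$ with the infinite summations in \eqref{rt1eqn} (and, if one prefers the integral route, with the integral in \eqref{genber}). I would handle this by dominated convergence. For $p$ in a small interval $(0,p_0)$ one has $\l_j(p)>j-\tfrac12$ for all $j$ and $0\le\dfrac{p^2+\l_j^2}{p\left(p+\frac1\pi\right)+\l_j^2}\le 1$; moreover $\frac{\l_j\a}{\pi}\ge\frac{\l_1\a}{\pi}>\frac{\a}{2\pi}>p$, which forces $\bigl|\s\!\left(\tfrac{\l_j\a}{\pi}\right)\bigr|>1$ and hence $\bigl|\s\!\left(\tfrac{\l_j\a}{\pi}\right)e^{2\a\l_j}-1\bigr|>e^{(2j-1)\a}$, while $j-\tfrac12<\l_j<j$ controls the power $\l_j^{-2m-1}$; thus the $j$th summand is dominated in modulus, uniformly in $p$, by a constant (depending only on $m$) times $j^{2|m|}e^{-(2j-1)\a}$, which is summable, and the analogous bound holds on the $\b$-side. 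Likewise, $\s_p(2\pi x)<\sum_{j\ge 1}e^{-2\pi(j-1/2)x}=\dfrac1{2\sinh(\pi x)}$ provides the integrable majorant $\dfrac{x^{2k-1}}{2\sinh(\pi x)}$ for the integrand in \eqref{genber}, which gives $\lim_{p\to 0}B_{2k}^{(p)}=(-1)^{k+1}4k\int_{0}^{\infty}\frac{x^{2k-1}}{2\sinh(\pi x)}\,dx=(2^{2k}-1)B_{2k}$ as an alternative to the route via $\zeta_p(2k)$. With these majorants the termwise passage to the limit is legitimate and the proof is complete.
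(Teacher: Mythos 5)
Your proposal is correct and follows exactly the route the paper intends: the corollary is obtained by letting $p\to 0$ in Theorem \ref{Ramanujantype1}, using $\l_j\to j-\tfrac12$, $\s\to-1$, \eqref{zpzero}, and $B_{2k}^{(p)}\to(2^{2k}-1)B_{2k}$, with the $j=0$ and $j=m+1$ terms of the Bernoulli sum vanishing. The paper gives no written proof beyond this one-line assertion, so your dominated-convergence justification of the termwise limits is a welcome (and accurate) filling-in of the details.
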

Lerch's formula for $\zeta(4m+3), m\geq0$ is \cite{lerch}
\begin{align}
\zeta(4m+3)+2\sum_{j=1}^{\infty}\frac{1}{j^{4m+3}(e^{2\pi j}-1)}=\pi^{4m+3}2^{4m+2}\sum_{j=0}^{2m+2}\frac{(-1)^{j+1}B_{2j}B_{4m+4-2j}}{(2j)!(4m+4-2j)!}.\nonumber
\end{align}
If we replace $m$ by $2m+1$ and let $\a=\b=\pi$ in Theorem \ref{Ramanujantype1}, we are led to a new generalization of Lerch's theorem:
\begin{corollary}
For $m\in\mathbb{N}\cup\{0\}$ and $p>0$,
\begin{align}
\zeta_p(4m+3)+2\sum_{j=1}^{\infty}\frac{p^2+\l_j^2}{p\left(p+\frac{1}{\pi}\right)+\l_j^2}\cdot\frac{\l_{j}^{-4m-3}}{\s(\l_j)e^{2\pi \l_j}-1}=-2^{4m+2}\pi^{4m+3}\sum_{j=0}^{2m+2}\frac{(-1)^jB_{2j}^{(p)}B_{4m-2j+4}^{(p)}}{(2j)!(4m-2j+4)!}.\nonumber
\end{align}
\end{corollary}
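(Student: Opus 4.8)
The plan is to read this identity off as a specialization of Theorem \ref{Ramanujantype1}, in exactly the way that Lerch's formula follows from Ramanujan's formula \eqref{zetaodd}. First I would replace $m$ by $2m+1$ throughout \eqref{rt1eqn}; since $m\in\mathbb{N}\cup\{0\}$ makes $2m+1$ a positive, hence nonzero, integer, the hypothesis of Theorem \ref{Ramanujantype1} holds. Under this replacement $2(2m+1)+1=4m+3$, so $\zeta_{p}(2m+1)\mapsto\zeta_{p}(4m+3)$ and $\l_j^{-2m-1}\mapsto\l_j^{-4m-3}$; the upper index of the finite sum becomes $(2m+1)+1=2m+2$; the factor $2^{2m}$ becomes $2^{4m+2}$; and the $j$-th term of the finite sum becomes $\frac{(-1)^{j}B_{2j}^{(p)}B_{4m-2j+4}^{(p)}}{(2j)!(4m-2j+4)!}\,\a^{2m-j+2}\b^{j}$.

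Next I would set $\a=\b=\pi$, which is permitted since $\a\b=\pi^2$. Then $\frac{\l_j\a}{\pi}=\frac{\l_j\b}{\pi}=\l_j$ and $e^{2\a\l_j}=e^{2\b\l_j}=e^{2\pi\l_j}$, so the two infinite series on the two sides of \eqref{rt1eqn} collapse into the single series
\begin{equation*}
S:=\sum_{j=1}^{\infty}\frac{p^2+\l_j^2}{p\left(p+\frac{1}{\pi}\right)+\l_j^2}\cdot\frac{\l_j^{-4m-3}}{\s(\l_j)e^{2\pi\l_j}-1},
\end{equation*}
whose convergence is already guaranteed by Theorem \ref{Ramanujantype1}. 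Moreover $\a^{2m-j+2}\b^{j}=\pi^{2m+2}$ no longer depends on $j$, $\a^{-(2m+1)}=\pi^{-(2m+1)}$, and, crucially, $(-\b)^{-(2m+1)}=-\pi^{-(2m+1)}$ because $2m+1$ is odd. This sign is the essential point: it forces the two ``series sides'' of the modular relation to reinforce one another rather than cancel, collapsing the two-variable relation into a one-variable identity. (An even substitution for $m$ would instead make the series sides cancel, which is why the formula is available for $\zeta_p(4m+3)$ and not for $\zeta_p(4m+1)$.)

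After these substitutions, \eqref{rt1eqn} reads
\begin{align*}
\pi^{-(2m+1)}\left(\frac{1}{2}\zeta_{p}(4m+3)+S\right)
&=-\pi^{-(2m+1)}\left(\frac{1}{2}\zeta_{p}(4m+3)+S\right)\\
&\quad-2^{4m+2}\pi^{2m+2}\sum_{j=0}^{2m+2}\frac{(-1)^{j}B_{2j}^{(p)}B_{4m-2j+4}^{(p)}}{(2j)!(4m-2j+4)!}.
\end{align*}
Transposing the first term on the right leaves $2\pi^{-(2m+1)}\left(\frac{1}{2}\zeta_{p}(4m+3)+S\right)$ on the left; multiplying through by $\pi^{2m+1}$ and using $\pi^{2m+1}\cdot\pi^{2m+2}=\pi^{4m+3}$ gives exactly the asserted identity. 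There is no genuine obstacle here; the argument is a bookkeeping of exponents plus the parity observation above, the only delicate point being that the power of $\pi$ attached to the finite sum after the substitution is $2m+2$, which then combines with the $\pi^{2m+1}$ to produce $\pi^{4m+3}$. As a check, $p\to\infty$ recovers Lerch's formula for $\zeta(4m+3)$ above (using \eqref{zpinf} and $\lim_{p\to\infty}B_{2k}^{(p)}=B_{2k}$), and $p\to0$ yields, via \eqref{zpzero}, the analogue for $\zeta(4m+3,\frac{1}{2})$.
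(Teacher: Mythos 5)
Your proposal is correct and is exactly the paper's argument: the authors obtain this corollary by replacing $m$ with $2m+1$ and setting $\a=\b=\pi$ in Theorem \ref{Ramanujantype1}, with the same parity observation that $(-\b)^{-(2m+1)}=-\pi^{-(2m+1)}$ making the two series sides reinforce rather than cancel. Your exponent bookkeeping ($\a^{2m-j+2}\b^{j}=\pi^{2m+2}$ combining with $\pi^{2m+1}$ to give $\pi^{4m+3}$) checks out.
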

Letting $p\to0$ in the above corollary, then employing \eqref{zpzero} and the fact that $\l_j\to j-1/2$, we get
\begin{corollary}\label{zetanew}
For $m\in\mathbb{N}\cup\{0\}$, we have
\begin{align}
&\zeta(4m+3)-\frac{2}{(1-2^{-4m-3})}\sum_{j=1}^{\infty}\frac{(2j-1)^{-4m-3}}{e^{(2j-1)\pi}+1}\nonumber\\
&=-\frac{2^{4m+2}\pi^{4m+3}}{2^{4m+3}-1}\sum_{j=1}^{2m+1}\frac{(-1)^jB_{2j}B_{4m-2j+4}}{(2j)!(4m-2j+4)!}(2^{2j}-1)(2^{4m-2j+4}-1).\nonumber
\end{align}
\end{corollary}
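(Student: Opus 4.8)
\emph{Proof proposal.} The plan is to deduce this corollary by letting $p\to 0$ in the preceding corollary (the $p$-analogue of Lerch's theorem), so the work consists of evaluating each side in the limit and justifying the passage of the limit through the infinite series. On the left-hand side, $\zeta_p(4m+3)\to(2^{4m+3}-1)\zeta(4m+3)$ by \eqref{zpzero}. For the series, I would first record the elementary fact that every root $\l_j$ of \eqref{ce2} satisfies $\l_j\in(j-\tfrac12,j)$ for all $p>0$ (since \eqref{ce2} forces $\tan(\pi\l)=-\l/p<0$, which has exactly one solution in each such interval), so in particular $\l_j>j-\tfrac12\ge\tfrac12$ uniformly in $p$. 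Since the coefficient $\frac{p^2+\l_j^2}{p(p+1/\pi)+\l_j^2}$ is at most $1$ and, for $0<p<\tfrac12$, $\s(\l_j)=\frac{p+\l_j}{p-\l_j}$ is negative with absolute value exceeding $1$, one gets $|\s(\l_j)e^{2\pi\l_j}-1|>e^{2\pi\l_j}>e^{(2j-1)\pi}$; hence the $j$th summand is dominated by $(j-\tfrac12)^{-4m-3}e^{-(2j-1)\pi}$, uniformly in $p\in(0,\tfrac12)$. Dominated convergence then lets me take the limit termwise: the coefficient tends to $1$, $\s(\l_j)\to-1$, and $\l_j\to j-\tfrac12$, so the $j$th term tends to $-\dfrac{(j-\tfrac12)^{-4m-3}}{e^{(2j-1)\pi}+1}=-2^{4m+3}\dfrac{(2j-1)^{-4m-3}}{e^{(2j-1)\pi}+1}$.

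On the right-hand side I would use $\displaystyle\lim_{p\to0}B_{2k}^{(p)}=(2^{2k}-1)B_{2k}$ for all $k\ge0$. For $k\ge1$ this follows from the identity $\zeta_p(2k)=\frac{(-1)^{k+1}(2\pi)^{2k}}{2(2k)!}B_{2k}^{(p)}$ together with \eqref{ef} and \eqref{zpzero}, while for $k=0$ it is immediate from $B_0^{(p)}=(1+1/(\pi p))^{-1}\to0$. Consequently the $j=0$ and $j=2m+2$ terms of the sum $\sum_{j=0}^{2m+2}$ drop out in the limit (each carries a vanishing factor coming from $B_0^{(p)}$), and the sum reduces to $\sum_{j=1}^{2m+1}$ with $B_{2j}^{(p)}B_{4m-2j+4}^{(p)}$ replaced by $(2^{2j}-1)(2^{4m-2j+4}-1)B_{2j}B_{4m-2j+4}$.

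Assembling the two sides yields
\begin{align*}
&(2^{4m+3}-1)\zeta(4m+3)-2^{4m+4}\sum_{j=1}^{\infty}\frac{(2j-1)^{-4m-3}}{e^{(2j-1)\pi}+1}\\
&\qquad=-2^{4m+2}\pi^{4m+3}\sum_{j=1}^{2m+1}\frac{(-1)^j(2^{2j}-1)(2^{4m-2j+4}-1)B_{2j}B_{4m-2j+4}}{(2j)!(4m-2j+4)!},
\end{align*}
and dividing by $2^{4m+3}-1$, together with the elementary identity $\frac{2^{4m+4}}{2^{4m+3}-1}=\frac{2}{1-2^{-4m-3}}$, gives the asserted formula. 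The only genuinely delicate point is the interchange of $\lim_{p\to0}$ with the infinite sum on the left, and the uniform bound above (which hinges on the localization $\l_j\in(j-\tfrac12,j)$) settles it; the remaining manipulations are routine.
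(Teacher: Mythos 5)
Your proposal is correct and follows the paper's own route: the paper simply states that the corollary follows by letting $p\to 0$ in the generalized Lerch formula, using \eqref{zpzero} and $\l_j\to j-\tfrac12$. The only difference is that you supply the justification for the termwise passage to the limit (via the localization $\l_j\in(j-\tfrac12,j)$ and a uniform dominating series) and the limit $B_{2k}^{(p)}\to(2^{2k}-1)B_{2k}$, details the paper leaves implicit.
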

Substituting $m=0$ and $m=1$ in Corollary \ref{zetanew}, we are led to new formulas for $\zeta(3)$ and $\zeta(7)$ respectively:
\begin{align*}
\zeta(3)-\frac{16}{7}\sum_{j=1}^{\infty}\frac{(2j-1)^{-3}}{e^{(2j-1)\pi}+1}&=\frac{\pi^3}{28},\\
\zeta(7)-\frac{256}{127}\sum_{j=1}^{\infty}\frac{(2j-1)^{-7}}{e^{(2j-1)\pi}+1}&=\frac{7\pi^7}{22860}.
\end{align*}
Note that one cannot let $m=0$ in Theorem \ref{Ramanujantype1} because both $\zeta_p(s)$ and $\eta_p(s)$ have pole at $s=1$. However, the associated Lambert series corresponding to $m=0$ in these theorems are well-defined at $m=0$. The transformations involving them are now given in two separate theorems below.
\begin{theorem}\label{dedekindgen}
Let $p>0$ and let $\s(z)$ be defined by \eqref{sigmap}. For $\a,\b>0$ such that $\a \b =\pi^2$,
\begin{align}
\sum_{j=1}^{\infty}&\frac{p^2+\l_j^2}{p\left(p+\frac{1}{\pi}\right)+\l_j^2}\cdot\frac{\l_{j}^{-1}}{\s\left(\frac{\l_j \a}{\pi} \right)e^{2\a \l_j}-1}-\sum_{j=1}^{\infty}\frac{p^2+\l_j^2}{p\left(p+\frac{1}{\pi}\right)+\l_j^2}\cdot\frac{\l_{j}^{-1}}{\s\left(\frac{\l_j \b}{\pi} \right)e^{2\b \l_j}-1} \nonumber\\
&\qquad\qquad\qquad=\frac{1}{12}\frac{1+\frac{3}{\pi p}\left( 1+\frac{1}{\pi p}\right)}{\left( 1+\frac{1}{\pi p}\right)^3}(\b-\a)+\frac{1}{4}\log \left(\frac{\a}{\b} \right).
\end{align}
\end{theorem}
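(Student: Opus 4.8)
The plan is to realise the left-hand Lambert-type series as a Mellin--Barnes integral whose kernel is built from $\zeta_p$ and $\eta_p$, to move the line of integration past three poles, and to recognise what remains as the companion series via the functional equation \eqref{2.30}; this mirrors the classical derivation of the Dedekind eta transformation from $\frac{1}{2\pi i}\int_{(c)}\Gamma(w)\zeta(w)\zeta(w+1)(2\alpha)^{-w}\,dw$. The computationally decisive first step is the Mellin transform identity
\begin{equation*}
\int_0^\infty\frac{y^{w-1}}{\sigma(y)e^{2\pi y}-1}\,dy=\frac{\Gamma(w)\,\eta_p(w)}{(2\pi)^w}\qquad(\operatorname{Re}(w)>1),
\end{equation*}
which turns the forbidding denominator $\sigma(\lambda_j\alpha/\pi)e^{2\alpha\lambda_j}-1$ into a tractable object. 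It follows from \eqref{eps}--\eqref{s} by writing $\Gamma(w)\eta_p(w)=\sum_{k\ge1}k^{-w}\int_0^\infty e^{-x}\bigl(\frac{2\pi pk-x}{2\pi pk+x}\bigr)^kx^{w-1}\,dx$, substituting $x=2\pi ky$ so that $\frac{2\pi pk-x}{2\pi pk+x}=\sigma(y)^{-1}$, and summing $\sum_{k\ge1}(e^{-2\pi y}\sigma(y)^{-1})^k=(\sigma(y)e^{2\pi y}-1)^{-1}$ (legitimate since $|e^{-2\pi y}\sigma(y)^{-1}|<1$ for all $y>0$). Writing $\varphi_p(\alpha)$ for the first sum in the theorem, whose $j$-th term is $\frac{p^2+\lambda_j^2}{p(p+1/\pi)+\lambda_j^2}\lambda_j^{-1}h(\lambda_j\alpha/\pi)$ with $h(y)=(\sigma(y)e^{2\pi y}-1)^{-1}$, Mellin inversion of the identity together with the Dirichlet series \eqref{zps} gives, for $c>1$,
\begin{equation*}
\varphi_p(\alpha)=\frac{1}{2\pi i}\int_{(c)}\Gamma(w)\,\eta_p(w)\,\zeta_p(w+1)\,(2\alpha)^{-w}\,dw.
\end{equation*}

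Using Stirling's formula and polynomial vertical-strip bounds for $\zeta_p$ and $\eta_p$ (obtained from \eqref{2.30} by Phragm\'en--Lindel\"of), I would move the line from $\operatorname{Re}(w)=c$ to $\operatorname{Re}(w)=-c$ with $1<c<2$, crossing simple poles at $w=1$ (from $\eta_p$) and $w=-1$ (from $\Gamma$) and a double pole at $w=0$ (from $\Gamma$ and $\zeta_p(w+1)$ together). Using $\zeta_p(2)=\pi^2B_2^{(p)}$, $\zeta_p(0)=-\tfrac12B_0^{(p)}$, $\eta_p(-1)=-\tfrac12B_2^{(p)}$ (the last from the symmetric form of \eqref{2.30} at $s=2$) and $\alpha\beta=\pi^2$, the residues at $w=\pm1$ combine to $\tfrac{B_2^{(p)}}{2(1+1/(\pi p))}(\beta-\alpha)$; expanding $\Gamma(w)=\tfrac1w-\gamma+\cdots$, $\zeta_p(w+1)=\tfrac1w+\gamma_p+\cdots$, $\eta_p(w)=-\tfrac12+\eta_p'(0)w+\cdots$ (the value $\eta_p(0)=-\tfrac12$ again from \eqref{2.30}) and $(2\alpha)^{-w}$ shows the residue at $w=0$ is $\eta_p'(0)+\tfrac12\log(2\alpha)-\tfrac12(\gamma_p-\gamma)$. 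In the remaining integral on $\operatorname{Re}(w)=-c$ I set $w=-v$ and apply both forms of the functional equation, $\zeta_p(1-v)=\tfrac{2\cos(\pi v/2)\Gamma(v)}{(2\pi)^v}\eta_p(v)$ and $\eta_p(-v)=\tfrac{-2\sin(\pi v/2)\Gamma(v+1)}{(2\pi)^{v+1}}\zeta_p(v+1)$, together with $\Gamma(-v)\Gamma(v+1)\Gamma(v)=-\pi\Gamma(v)/\sin\pi v$; since $\alpha\beta=\pi^2$ the integrand collapses exactly to $\Gamma(v)\eta_p(v)\zeta_p(v+1)(2\beta)^{-v}$, so this integral equals $\varphi_p(\beta)$.

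Collecting the pieces, $\varphi_p(\alpha)-\varphi_p(\beta)=\tfrac{B_2^{(p)}}{2(1+1/(\pi p))}(\beta-\alpha)+\eta_p'(0)+\tfrac12\log(2\alpha)-\tfrac12(\gamma_p-\gamma)$, and two bookkeeping identities close the argument. Expanding \eqref{2.30} around $s=0$ (the left side being $-1/s+\gamma_p+\cdots$) gives $\gamma_p=2\eta_p'(0)+\gamma+\log(2\pi)$, which forces all the $\gamma_p$'s and $\gamma$'s to cancel and reduces the constant part to $\tfrac12\log(2\alpha)-\tfrac12\log(2\pi)=\tfrac12\log(\alpha/\pi)=\tfrac14\log(\alpha/\beta)$. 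Comparing the residue of $\Gamma(w)\eta_p(w)/(2\pi)^w$ at $w=-1$ with the coefficient of $y$ in the Taylor expansion of $(\sigma(y)e^{2\pi y}-1)^{-1}$ at $y=0$ — a short computation from $\sigma(y)=1+2y/p+2y^2/p^2+\cdots$ — evaluates $B_2^{(p)}=\tfrac16\bigl(1+\tfrac3{\pi p}(1+\tfrac1{\pi p})\bigr)\big/(1+\tfrac1{\pi p})^2$, so that $\tfrac{B_2^{(p)}}{2(1+1/(\pi p))}$ is exactly $\tfrac1{12}\bigl(1+\tfrac3{\pi p}(1+\tfrac1{\pi p})\bigr)\big/(1+\tfrac1{\pi p})^3$, which is the claimed coefficient. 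I expect the main obstacle to be Step 1 (turning the awkward denominator into $\Gamma(w)\eta_p(w)/(2\pi)^w$) together with the bookkeeping of the double pole at $w=0$; the single point that makes everything close up is the relation $\gamma_p=2\eta_p'(0)+\gamma+\log(2\pi)$ extracted from the functional equation, without which the constant $\tfrac14\log(\alpha/\beta)$ would not emerge cleanly.
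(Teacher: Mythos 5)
Your proposal is correct and follows essentially the same route as the paper, which proves this result by representing the first series as a Mellin--Barnes integral, shifting the contour past a double pole at $0$ and simple poles at $\pm 1$, and identifying the reflected integral with the companion series; your kernel $\Gamma(w)\eta_p(w)(2\pi)^{-w}$ is the same function as the paper's $\zeta_p(1-w)/(2\cos(\pi w/2))$ by the functional equation \eqref{2.30}, so the two computations coincide. Your residue bookkeeping (in particular the relation $\gamma_p=2\eta_p'(0)+\gamma+\log(2\pi)$ and the evaluation of $B_2^{(p)}$, which matches the value of $\zeta_p(2)$ recorded in the paper's footnote) checks out and in fact supplies the details the paper omits.
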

Letting $p\to\infty$ gives the well-known transformation formula for the logarithm of the Dedekind eta function \cite[Chapter 16, Entry 27(iii)]{ramnote} or \cite[p.~320, Formula (3.6)]{lnb}, namely, for $\a, \b>0$ and $\a\b=\pi^2$,
\begin{align}
\sum_{j=1}^{\infty}\frac{1}{j(e^{2j\a}-1)}-\sum_{j=1}^{\infty}\frac{1}{j(e^{2j\b}-1)}=\frac{\b-\a}{12}+\frac{1}{4}\log\left(\frac{\a}{\b}\right)\nonumber
\end{align}
whereas letting $p \to 0$ results in
\begin{corollary}
For $\a \b =\pi^2$, 
\begin{align}
\sum_{j=1}^{\infty}\frac{1}{\left(2j-1\right)}\frac{1}{e^{\a (2j-1)}+1}+\frac{1}{8}\log \a
=\sum_{j=1}^{\infty}\frac{1}{\left(2j-1\right)}\frac{1}{e^{\b (2j-1)}+1}+\frac{1}{8}\log \b.\nonumber
\end{align}
\end{corollary}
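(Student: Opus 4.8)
The plan is to obtain this corollary by letting $p\to 0^{+}$ in Theorem~\ref{dedekindgen}, which is valid for every $p>0$. I would first record the three elementary limits underlying \eqref{zpzero}: as $p\to 0^{+}$ one has $\l_j\to j-\tfrac12$ for each fixed $j\ge1$; the weight $\dfrac{p^2+\l_j^2}{p\left(p+\frac1\pi\right)+\l_j^2}\to 1$; and, by \eqref{sigmap}, $\s\!\left(\tfrac{\l_j\a}{\pi}\right)=\dfrac{p+\l_j\a/\pi}{p-\l_j\a/\pi}\to -1$. Hence, for each fixed $j$,
\begin{align*}
\frac{p^2+\l_j^2}{p\left(p+\frac1\pi\right)+\l_j^2}\cdot\frac{\l_j^{-1}}{\s\!\left(\frac{\l_j\a}{\pi}\right)e^{2\a\l_j}-1}\ \longrightarrow\ \frac{\left(j-\frac12\right)^{-1}}{-e^{2\a\left(j-\frac12\right)}-1}=-\frac{2}{(2j-1)\bigl(e^{\a(2j-1)}+1\bigr)}\qquad(p\to0^{+}),
\end{align*}
and similarly for the $\b$-series.

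For the right-hand side of Theorem~\ref{dedekindgen} I would substitute $u=\dfrac{1}{\pi p}$, so that the coefficient of $\b-\a$ becomes $\dfrac{1}{12}\cdot\dfrac{1+3u(1+u)}{(1+u)^3}=\dfrac{3u^2+3u+1}{12(1+u)^3}$, which tends to $0$ as $p\to 0^{+}$ (numerator of degree $2$, denominator of degree $3$). Thus the whole right-hand side tends to $\tfrac14\log(\a/\b)$.

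The one genuinely non-trivial point is the interchange of $\lim_{p\to0^{+}}$ with the two infinite sums, and I would settle it by Tannery's theorem. The weight factor lies in $(0,1)$ because $p\left(p+\frac1\pi\right)+\l_j^2>p^2+\l_j^2$, and from \eqref{ce2} one has, for every $p>0$, the uniform enclosure $j-\tfrac12<\l_j(p)<j$ (the quantity $p\sin(\pi\l)+\l\cos(\pi\l)$ changes sign across $\left(j-\tfrac12,j\right)$ but not across $\left(j,j+\tfrac12\right)$), so in particular $\l_j(p)>j-\tfrac12$. Fixing $0<p<\tfrac{1}{2\pi}\min(\a,\b)$, we then have $\l_j\a/\pi>\tfrac{\a}{2\pi}>p$, so $\s(\l_j\a/\pi)$ is negative with $|\s(\l_j\a/\pi)|>1$, whence the $j$-th term of the $\a$-series is bounded in absolute value by
\begin{align*}
\frac{\l_j^{-1}}{\bigl|\s(\l_j\a/\pi)\bigr|\,e^{2\a\l_j}}<\frac{e^{-2\a\l_j}}{\l_j}<2\,e^{-\a(2j-1)},
\end{align*}
a convergent majorant independent of such $p$; the same estimate holds for the $\b$-series. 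This licenses passing the limit inside both sums.

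Assembling the three pieces, the left side of Theorem~\ref{dedekindgen} converges to $-2\sum_{j\ge1}\frac{1}{(2j-1)(e^{\a(2j-1)}+1)}+2\sum_{j\ge1}\frac{1}{(2j-1)(e^{\b(2j-1)}+1)}$; equating this with $\tfrac14\log(\a/\b)$ and dividing by $2$ yields
\begin{align*}
\sum_{j=1}^{\infty}\frac{1}{(2j-1)\bigl(e^{\b(2j-1)}+1\bigr)}-\sum_{j=1}^{\infty}\frac{1}{(2j-1)\bigl(e^{\a(2j-1)}+1\bigr)}=\frac18\log\left(\frac{\a}{\b}\right),
\end{align*}
which rearranges at once into the asserted symmetric identity. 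The main obstacle is the uniform domination needed for Tannery's theorem, which in turn rests on the uniform lower bound $\l_j(p)>j-\tfrac12$ for the positive roots of \eqref{ce2}; everything else is a direct computation of limits.
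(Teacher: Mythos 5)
Your proposal is correct and follows exactly the route the paper intends: the corollary is obtained by letting $p\to0^{+}$ in Theorem \ref{dedekindgen}, using $\l_j\to j-\tfrac12$, the weight tending to $1$, $\s(\l_j\a/\pi)\to-1$, and the vanishing of the coefficient of $\b-\a$. The paper states the deduction without justifying the termwise passage to the limit, so your Tannery-type domination via the uniform enclosure $j-\tfrac12<\l_j(p)<j$ is a welcome (and correct) supplement rather than a departure.
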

\subsection{An extension of a modular relation on page $220$ of Ramanujan's Lost Notebook}
In the middle of the page $220$ of the Lost Notebook \cite{lnb}, Ramanujan gives an elegant modular relation involving the logarithmic derivative of the gamma function $\psi(x):=\frac{\G'(x)}{\G(x)}$. This result is stated below.

\textit{Define 
\begin{align}\label{phi}
\phi(x):=\psi(x)+\frac{1}{2x}-\log x.
\end{align}
Let Riemann's functions $\xi(s)$ and $\Xi(t)$ be respectively defined by
\begin{align}\label{xiXi}
\xi(s)&:=(s-1)\pi^{-\tfrac{1}{2}s}\Gamma(1+\tfrac{1}{2}s)\zeta(s),\nonumber\\
\Xi(t)&:=\xi(\tfrac{1}{2}+it).
\end{align}
If $\a$ and $\b$ are positive numbers such that $\a\b =1$, then
\begin{align}\label{ramfor}
\sqrt{\a}&\left\{\frac{\gamma - \log(2 \pi \a)}{2\a} +\sum_{n=1}^{\infty}\phi(n\a)\right\}=\sqrt{\b}\left\{\frac{\gamma - \log(2 \pi \b)}{2\b} +\sum_{n=1}^{\infty}\phi(n\a)\right\}\nonumber\\
&=-\frac{1}{\pi^{3/2}}\int_{0}^{\infty}\left|\Xi\left( \frac{t}{2}\right) \Gamma\left(\frac{-1+it}{4} \right)\right|^2\frac{\cos\left(\frac{t}{2}\log\a \right)}{1+t^2}dt.
\end{align}}
A proof of this result can be found in \cite{bcbad} and \cite{series}. There exist several generalizations of this formula \cite[Theorem 1.4]{dixit}, \cite[Theorems 1.6, 1.7]{charram}, \cite[Theorem 8]{nkim} and \cite[Theorem 1.1.5]{dk2}.

In what follows, we obtain a new generalization of \eqref{ramfor} in the setting of Koshliakov's zeta functions $\zeta_p(s)$ and $\eta_p(s)$. 
To state it, however, we have to introduce definitions of some functions that Koshliakov gives in his manuscript.

Koshliakov defines the function $\omega_p(s)$ by \cite[p.~148, Chapter 9, Equation (8)]{koshliakov3}
\begin{align}\label{10.8}
\omega_p(s):= \frac{\zeta_p(s)+\eta_p(s)}{2}.
\end{align}
It satisfies the functional equation \cite[p.~148, Chapter 9, Equation (9)]{koshliakov3}
\begin{align}
\pi^{-\frac{s}{2}}\Gamma\left(\frac{s}{2} \right)\omega_p(s)=\pi^{-\frac{1-s}{2}}\Gamma\left(\frac{1-s}{2} \right)\omega_p(1-s).\nonumber
\end{align}
It is easy to see that
\begin{align}\label{infzero}
\lim_{p\to\infty}\omega_p(s)=\zeta(s),\hspace{5mm}\lim_{p\to0}\omega_p(s)=(2^{s-1}+2^{-s}-1)\zeta(s).
\end{align}
Koshliakov also gives generalizations of Riemann's functions defined in \eqref{xiXi}, namely \cite[p.~148, Chapter 9, Equation (10)]{koshliakov3}, 
{\allowdisplaybreaks\begin{align}
\xi_p(s)&=\frac{s(s-1)}{2}\pi^{-\frac{s}{2}}\Gamma{\left( \frac{s}{2}\right)}\omega_p(s),\label{10.10a}\\
\Xi_p(t)&=\xi_p\left(\frac{1}{2}+i t \right).\label{10.10}
\end{align}}
It is easy to check that 
\begin{equation}
\xi_p(1-s)=\xi_p(s),\nonumber
\end{equation}
and that $\Xi_p(t)$ is an even function of $t$. Also, clearly, 
\begin{align}\label{xiinfty}
\lim_{p\to\infty}\xi_{p}(s)=\xi(s),\hspace{5mm}\lim_{p\to\infty}\Xi_{p}(t)=\Xi(t).
\end{align}

We also require a lemma which gives a new integral representation for Koshliakov's second generalized Euler constant, different from that given by Koshliakov \cite[p.~48, Chapter 2, Equation (50)]{koshliakov3}.
\begin{lemma}\label{cp2new}
We have
\begin{align}
C^{(2)}_p=\int_{0}^{\infty}\left\{\frac{1}{\s\left(\frac{x}{2\pi}\right)e^{x}-1}-\frac{1}{1+\frac{1}{\pi p}}\frac{e^{-x}}{x}\right\}\, dx.\nonumber
\end{align}
\end{lemma}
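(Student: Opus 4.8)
The plan is to follow the Laplace-transform route already used above for $C^{(1)}_p$, now applied to the series $\sum_{k\ge 1}(1,2\pi pk)_k/k$. Taking $s=1$ in the definition of $(s,\nu k)_k$ gives $(1,2\pi pk)_k=\int_0^\infty e^{-x}\left(\frac{2\pi pk-x}{2\pi pk+x}\right)^k\,dx$, and the substitution $x=kt$ turns this into $k\int_0^\infty e^{-kt}\left(\frac{2\pi p-t}{2\pi p+t}\right)^k\,dt$. Hence, writing $w(t):=\dfrac{e^{-t}(2\pi p-t)}{2\pi p+t}$, one has $\dfrac{(1,2\pi pk)_k}{k}=\int_0^\infty w(t)^k\,dt$. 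Since $|w(t)|\le e^{-t}<1$ for all $t>0$, summing the geometric series and using $1/w(t)=e^t\,\s(t/2\pi)$ (with $\s(t/2\pi)=(2\pi p+t)/(2\pi p-t)$) yields $\sum_{k=1}^\infty w(t)^k=\dfrac{w(t)}{1-w(t)}=\dfrac{1}{\s(t/2\pi)e^t-1}$. Separately, exactly as in the displayed derivation of $C^{(1)}_p$, Frullani's integral gives $\dfrac{1}{1+\frac{1}{\pi p}}\log n=\dfrac{1}{1+\frac{1}{\pi p}}\int_0^\infty\dfrac{e^{-t}-e^{-nt}}{t}\,dt$.

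Combining these and interchanging the \emph{finite} sum with the integral, the quantity inside the limit defining $C^{(2)}_p$ equals
\begin{equation*}
\int_0^\infty\left\{\sum_{k=1}^{n-1}w(t)^k-\frac{1}{1+\frac{1}{\pi p}}\cdot\frac{e^{-t}-e^{-nt}}{t}\right\}dt .
\end{equation*}
Now I would let $n\to\infty$. The integrand tends pointwise to $\dfrac{1}{\s(t/2\pi)e^t-1}-\dfrac{1}{1+\frac{1}{\pi p}}\cdot\dfrac{e^{-t}}{t}$, which is precisely the integrand in the statement and is integrable on $(0,\infty)$: near $t=0$ the two $1/t$-singularities cancel, because the expansion $\s(t/2\pi)e^t-1=t\left(1+\frac{1}{\pi p}\right)+O(t^2)$ gives $\dfrac{1}{\s(t/2\pi)e^t-1}=\dfrac{1}{(1+\frac1{\pi p})\,t}-\dfrac12+O(t)$, so the combined integrand is $O(1)$ there, while as $t\to\infty$ both terms decay exponentially (note $\s(t/2\pi)e^t\to-\infty$). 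To push the limit through the integral I would split $\int_0^\infty=\int_1^\infty+\int_0^1$: on $[1,\infty)$ the bounds $|w(t)|\le e^{-1}$ and $|1-w(t)|\ge 1-e^{-1}$ provide a constant multiple of $e^{-t}$ as dominating function, so dominated convergence applies immediately; on $[0,1]$, where $\sum_{k=1}^{n-1}w(t)^k=\dfrac{w(t)-w(t)^n}{1-w(t)}$ and $\dfrac{e^{-t}-e^{-nt}}{t}$ each individually blow up like $\dfrac{1}{(1+\frac1{\pi p})\,t}$ as $t\to0$, one must keep the two pieces together and use their cancellation, exploiting the expansion $\log w(t)=-t\left(1+\frac{1}{\pi p}\right)+O(t^3)$ near $0$ to build an $n$-uniform integrable majorant. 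This identifies the limit with the integral claimed in the lemma.

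The one genuinely delicate point is this last limiting step on $[0,1]$: there the convergence is of concentration type — after the rescaling $t=u/n$ both the partial geometric sum and the Frullani integrand become a narrow spike of height of order $n$ and width of order $1/n$ — so the cancellation between them must be tracked quantitatively (for instance via the substitution $t=u/n$ followed by a further use of Frullani on the scaled variable) rather than by a naive interchange of limit and integral. Everything else — the integral formulas for $(s,\nu k)_k$ and $\s$, the geometric summation, Frullani's formula, and the exponential tail estimate on $[1,\infty)$ — is routine.
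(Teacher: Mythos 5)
Your proof is correct and follows essentially the same route as the paper: the identity $\sum_{k=1}^{n-1}(1,2\pi pk)_k/k=\int_0^\infty\frac{w(t)-w(t)^n}{1-w(t)}\,dt$ with $w(t)=e^{-t}\frac{2\pi p-t}{2\pi p+t}$, which you derive by the substitution $x=kt$ and geometric summation, is exactly the formula the paper quotes from page 47 of Koshliakov's manuscript, and both arguments then combine it with Frullani's integral and pass to the limit $n\to\infty$. The only difference is one of care: the paper dismisses the limit interchange with ``it is easily gleaned \dots since $\s(-x/(2\pi))<1$,'' whereas you correctly flag and outline the genuinely delicate cancellation of the two logarithmically divergent pieces near $t=0$.
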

Finally, we require a generalization of an integral identity of Ramanujan \cite[Equation (22)]{riemann} which we derive in the following theorem.
\begin{theorem}\label{genrama}
Let $n$ denote a positive real number. Define
\begin{align}
F_p(n):=\int_{0}^{\infty}\left|\Gamma\left(\frac{-1+it}{4}\right) \right|^{2}\Xi_{p}\left(\frac{t}{2} \right)\Xi\left(\frac{t}{2} \right)\frac{\cos(nt)}{1+t^2}dt.\nonumber
\end{align}
Then
\begin{align}\label{Fn}
F_p(n)&=\frac{1}{2}\pi^{3/2}\int_{0}^{\infty}\left( \left(\s_p\left(xe^n\right)+\frac{1}{(\s\left( \frac{xe^n}{2\pi}\right)e^{xe^{n}}-1)} \right)-\left(1+\frac{1}{1+\frac{1}{\pi p}}\right)\frac{1}{xe^{n}}\right)\nonumber\\
&\qquad\qquad\times\left(\frac{1}{e^{xe^{-n}}-1} -\frac{1}{xe^{-n}}\right)dx.
\end{align}
\end{theorem}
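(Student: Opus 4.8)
The plan is to prove \eqref{Fn} by recognizing its right-hand side as a Mellin--Parseval convolution of two transforms that can be written down explicitly, and then converting the resulting vertical line integral into the integral $F_p(n)$ by routine gamma-function manipulations. Write the two bracketed factors on the right of \eqref{Fn} as $H_p(xe^{n})$ and $G(xe^{-n})$, where
\begin{align*}
H_p(y):=\s_p(y)+\frac{1}{\s\left(\frac{y}{2\pi}\right)e^{y}-1}-\left(1+\frac{1}{1+\frac{1}{\pi p}}\right)\frac{1}{y},\qquad G(y):=\frac{1}{e^{y}-1}-\frac{1}{y}.
\end{align*}
First I would compute their Mellin transforms. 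From \eqref{3.33} one gets $\int_{0}^{\infty}\s_p(y)y^{s-1}\,dy=\Gamma(s)\zeta_p(s)$ for $\textup{Re}(s)>1$ by integrating termwise; and expanding $\left(\s(y/(2\pi))e^{y}-1\right)^{-1}=\sum_{k\ge1}\left(\tfrac{2\pi p-y}{2\pi p+y}\right)^{k}e^{-ky}$, substituting $y=x/k$ in the $k$-th term, and invoking the definition \eqref{s} of $(s,2\pi pk)_{k}$ yields $\int_{0}^{\infty}\left(\s(y/(2\pi))e^{y}-1\right)^{-1}y^{s-1}\,dy=\Gamma(s)\eta_p(s)$ for $\textup{Re}(s)>1$. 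Since $\s_p(y)+\left(\s(y/(2\pi))e^{y}-1\right)^{-1}$ behaves like $\bigl(1+(1+1/(\pi p))^{-1}\bigr)y^{-1}$ as $y\to0^{+}$ and decays exponentially as $y\to\infty$, subtracting off this principal part (the constant $1+(1+1/(\pi p))^{-1}$ is thereby forced, and coincides with the residue of $2\Gamma(s)\omega_p(s)$ at $s=1$ assembled from the residues $1$ of $\zeta_p$ and $(1+1/(\pi p))^{-1}$ of $\eta_p$) and splitting $\int_{0}^{\infty}=\int_{0}^{1}+\int_{1}^{\infty}$ to analytically continue, one arrives at
\begin{align*}
\int_{0}^{\infty}H_p(y)y^{s-1}\,dy=2\Gamma(s)\omega_p(s),\qquad\int_{0}^{\infty}G(y)y^{s-1}\,dy=\Gamma(s)\zeta(s)\qquad\bigl(0<\textup{Re}(s)<1\bigr),
\end{align*}
both sides analytic on $\textup{Re}(s)=\tfrac12$, the second identity being the classical continuation of $\Gamma(s)\zeta(s)$.

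Next, because $|\Gamma(\tfrac12+it)|$ decays like $e^{-\pi|t|/2}$ while $\omega_p(\tfrac12+it)$ and $\zeta(\tfrac12+it)$ grow at most polynomially in $t$ (the convexity bound for $\omega_p$ following from its definition \eqref{10.8}, \eqref{zps}, \eqref{eps}, the functional equation \eqref{2.30}, and Phragm\'en--Lindel\"of), the two scaled Mellin transforms decay exponentially on $\textup{Re}(s)=\tfrac12$; together with the absolute convergence of $\int_{0}^{\infty}H_p(xe^{n})G(xe^{-n})\,dx$ (the integrand is $O(1)$ near $x=0$ and $O(x^{-2})$ as $x\to\infty$), Parseval's formula for Mellin transforms gives
\begin{align*}
\int_{0}^{\infty}H_p(xe^{n})G(xe^{-n})\,dx&=\frac{1}{2\pi i}\int_{(1/2)}\bigl(e^{-ns}\cdot2\Gamma(s)\omega_p(s)\bigr)\bigl(e^{n(1-s)}\Gamma(1-s)\zeta(1-s)\bigr)\,ds\\
&=\frac{1}{2\pi i}\int_{(1/2)}2\,e^{n(1-2s)}\Gamma(s)\omega_p(s)\,\Gamma(1-s)\zeta(1-s)\,ds,
\end{align*}
no residues being picked up since the integrand is analytic in the strip $0<\textup{Re}(s)<1$.

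It then remains to rewrite the integrand on $\textup{Re}(s)=\tfrac12$ in terms of $\Xi_p$ and $\Xi$. Using Legendre's duplication formula on $\Gamma(s)$ and on $\Gamma(1-s)$, the symmetric functional equations of $\omega_p$ and of $\zeta$, the identities $\xi_p(s)=\tfrac{s(s-1)}{2}\pi^{-s/2}\Gamma(\tfrac{s}{2})\omega_p(s)$ from \eqref{10.10a} and $\xi(s)=\tfrac{s(s-1)}{2}\pi^{-s/2}\Gamma(\tfrac{s}{2})\zeta(s)$, and the shift relations $\Gamma\bigl(\tfrac{s+1}{2}\bigr)=\tfrac{s-1}{2}\Gamma\bigl(\tfrac{s-1}{2}\bigr)$, $\Gamma\bigl(\tfrac{2-s}{2}\bigr)=-\tfrac{s}{2}\Gamma\bigl(-\tfrac{s}{2}\bigr)$, one checks that on $\textup{Re}(s)=\tfrac12$ (where $\overline{\Gamma((s-1)/2)}=\Gamma(-s/2)$)
\begin{align*}
2\,\Gamma(s)\omega_p(s)\,\Gamma(1-s)\zeta(1-s)=\frac{1}{\sqrt{\pi}}\cdot\frac{\left|\Gamma\bigl(\tfrac{s-1}{2}\bigr)\right|^{2}\xi_p(s)\,\xi(s)}{s(1-s)}.
\end{align*}
Now setting $s=\tfrac12+\tfrac{it}{2}$, so that $ds=\tfrac{i}{2}\,dt$, $s(1-s)=\tfrac14(1+t^{2})$, $\tfrac{s-1}{2}=\tfrac{-1+it}{4}$, $\xi_p(s)=\Xi_p(\tfrac{t}{2})$, $\xi(s)=\Xi(\tfrac{t}{2})$, and $e^{n(1-2s)}=e^{-int}$, the line integral collapses to
\begin{align*}
\int_{0}^{\infty}H_p(xe^{n})G(xe^{-n})\,dx=\frac{1}{\pi^{3/2}}\int_{-\infty}^{\infty}\left|\Gamma\left(\frac{-1+it}{4}\right)\right|^{2}\Xi_p\left(\frac{t}{2}\right)\Xi\left(\frac{t}{2}\right)\frac{e^{-int}}{1+t^{2}}\,dt,
\end{align*}
and since the integrand apart from $e^{-int}$ is real and even in $t$, this equals $\tfrac{2}{\pi^{3/2}}F_p(n)$; rearranging gives \eqref{Fn}.

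The main obstacle is the analytic bookkeeping in the first step: rigorously justifying the termwise integration in the $\eta_p$-expansion and checking that the analytically continued transform of $H_p$ is \emph{exactly} $2\Gamma(s)\omega_p(s)$ (the $\eta_p$-side is delicate because the coefficients $(s,2\pi pk)_{k}$ depend on $s$), pinning down the constant $1+(1+1/(\pi p))^{-1}$ from the $y\to0$ asymptotics of $\s_p(y)+\left(\s(y/(2\pi))e^{y}-1\right)^{-1}$, and establishing the vertical growth estimate for $\omega_p$ that licenses Parseval. Once these are in place, the remainder is routine manipulation of gamma factors.
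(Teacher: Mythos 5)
Your proposal is correct and follows essentially the same route as the paper: both apply Parseval's formula for Mellin transforms to the product of the two bracketed factors, using $\int_0^\infty H_p(y)y^{s-1}\,dy=2\Gamma(s)\omega_p(s)$ and $\Gamma(s)\zeta(s)$ as the key transforms, and then convert the resulting line integral on $\textup{Re}(s)=\tfrac12$ into the $\Xi_p\,\Xi$ integral via the duplication formula and the functional equations. The only differences are cosmetic (the paper first substitutes $x=2\pi t/\sqrt{\a}$ with $\a=e^{2n}$ and cites Koshliakov's manuscript for the two Mellin transforms rather than rederiving them), and your gamma-factor identity and the final constant $\tfrac12\pi^{3/2}$ both check out.
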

From \eqref{sigmap} and \eqref{3.33}, it is clear that
\begin{align*}
\lim_{p\to\infty}\sigma(z)=1, \hspace{5mm}\lim_{p\to\infty}\sigma_{p}(z)=\frac{1}{e^{z}-1}.
\end{align*}
Along with \eqref{xiinfty}, this implies that letting $p\to\infty$ in \eqref{Fn} gives Ramanujan's identity \cite[Equation (22)]{riemann}.

Armed with the definitions and notations from Section \ref{prelim} and Lemma \ref{cp2new} and Theorem \ref{genrama}, we are now ready to give our generalization of \eqref{ramfor}, which gives an infinite family of modular relations, one for each $p>0$.
\begin{theorem}\label{ramforgen}
Define 
\begin{align}\label{capitalphi}
\Phi_p(x): =\phi_{1,p}(x)+\phi_{2,p}(x),
\end{align}
where
\begin{align}
\phi_{1,p}(x):&= \psi_{1,p}(x)+\left( 1-\frac{1}{2}\frac{1}{\left(1+\frac{1}{\pi p}\right)} \right)\frac{1}{x}-\log x,\label{phi1px}\\
\phi_{2,p}(x):&=\psi_{2,p}(x)+\frac{2e^{2\pi p}}{1+\frac{1}{\pi p}}Q_{2\pi p}(0)+\frac{1}{2x}-\frac{1}{1+\frac{1}{\pi p}} \log x\label{phi2px},
\end{align}
with $\psi_{1,p}(x), \psi_{2,p}(x)$ and $Q_{\mu}(s)$ defined in \eqref{Digamma0}, \eqref{9.13} and \eqref{2.49} respectively. Let $C^{(1)}_{p}$ and $C^{(2)}_{p}$ be defined in \eqref{gamma1} and \eqref{gamma2} respectively. If $\a$ and $\b$ are positive numbers\footnote{The identity is actually valid for any complex numbers $\a$ and $\b$ such that Re$(\a)>0$ and Re$(\b)>0$.} such that $\a\b =1$, then
\begin{align}\label{ramforgeneqn}
&\sqrt{\a}\left(\frac{C^{(1)}_{p}+C^{(2)}_{p}-\left(1+\frac{1}{1+\frac{1}{\pi p}}\right)\log(2\pi \a)}{2\a}+\sum_{n=1}^{\infty}\Phi_{p}(n \a) \right)\nonumber\\
&=\sqrt{\b}\left(\frac{C^{(1)}_{p}+C^{(2)}_{p}-\left(1+\frac{1}{1+\frac{1}{\pi p}}\right)\log(2\pi \b)}{2\b}+\sum_{n=1}^{\infty}\Phi_{p}(n \b) \right)\nonumber \\
&=-\frac{2}{\pi^{3/2}}\int_{0}^{\infty}\left|\Gamma\left(\frac{-1+it}{4}\right) \right|^{2}\Xi_{p}\left(\frac{t}{2} \right)\Xi\left(\frac{t}{2} \right)\frac{\cos\left(\frac{1}{2}t\log \a \right)}{1+t^2}dt.
\end{align}
\end{theorem}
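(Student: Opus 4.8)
The plan is to establish the two equalities in \eqref{ramforgeneqn} separately: the first (the symmetry in $\a\leftrightarrow\b$) will follow once we show each side equals the integral on the right, and that integral is manifestly invariant under $\a\mapsto\b=1/\a$ since $\cos\left(\tfrac12 t\log\a\right)=\cos\left(\tfrac12 t\log\b\right)$ up to sign — in fact $\log\b=-\log\a$ so the cosine is unchanged. Hence the real content is the evaluation of the integral side in terms of the series side. I would start from Theorem \ref{genrama}, which already expresses $F_p(n)=\int_0^\infty\left|\Gamma\!\left(\tfrac{-1+it}{4}\right)\right|^2\Xi_p\!\left(\tfrac t2\right)\Xi\!\left(\tfrac t2\right)\tfrac{\cos(nt)}{1+t^2}\,dt$ as a product of two factors: a ``$\sigma_p$ plus $1/(\sigma(\cdot)e^{\cdot}-1)$ minus the polar term'' factor evaluated at $xe^n$, and a ``$1/(e^x-1)-1/x$'' factor evaluated at $xe^{-n}$. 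Setting $n=\tfrac12\log\a$ (so $e^n=\sqrt\a$, $e^{-n}=\sqrt\b$ since $\a\b=1$), the right side of \eqref{ramforgeneqn} becomes $-\tfrac{2}{\pi^{3/2}}F_p\!\left(\tfrac12\log\a\right)$, and Theorem \ref{genrama} rewrites this as $-\int_0^\infty(\textup{first factor at }x\sqrt\a)(\textup{second factor at }x\sqrt\b)\,dx$.

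The core of the argument is then to show that this last integral equals the bracketed expression $\tfrac{C^{(1)}_p+C^{(2)}_p-(1+1/(1+1/(\pi p)))\log(2\pi\a)}{2\a}+\sum_{n=1}^\infty\Phi_p(n\a)$, divided by $\sqrt\a$ — equivalently, that $\sqrt\a$ times the bracket equals $-F_p/(\ldots)$ with the right normalization. The strategy for this is to expand the second factor $\tfrac{1}{e^{x\sqrt\b}-1}-\tfrac{1}{x\sqrt\b}$ using the classical partial-fraction/series identity $\tfrac{1}{e^{y}-1}-\tfrac1y+\tfrac12=\sum_{n=1}^\infty\tfrac{2y}{y^2+4\pi^2 n^2}$ (or, more convenient here, the Mittag–Leffler expansion that produces a sum over $n$ of terms of the form appearing in $\psi$), substitute into the integral, interchange sum and integral, and recognize each resulting $x$-integral. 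By \eqref{intDigamma12} and \eqref{intDigamma22}, integrals of $\sigma_p(2\pi t)$ and of $1/(\sigma(t)e^{2\pi t}-1)$ against $\tfrac{t}{t^2+x^2}$ reproduce exactly $I_1(x)$ and $I_1^*(x)$, which via \eqref{Digamma1}, \eqref{Digamma2} are the building blocks of $\phi_{1,p}$ and $\phi_{2,p}$, hence of $\Phi_p$. The constants $C^{(1)}_p$, $C^{(2)}_p$ will emerge from the $n=0$/polar contributions, with Lemma \ref{cp2new} supplying precisely the integral representation of $C^{(2)}_p$ needed to match the $1/(\sigma(x/2\pi)e^x-1)$ piece; the $\log(2\pi\a)$ term comes from combining the $\log x$ contributions in \eqref{Digamma1}, \eqref{Digamma2} with the scaling $x\mapsto x\sqrt\a$.

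I expect the main obstacle to be the bookkeeping around the polar/constant terms: matching $-\bigl(1+\tfrac{1}{1+1/(\pi p)}\bigr)\log(2\pi\a)$ and the $C^{(1)}_p+C^{(2)}_p$ combination requires carefully tracking every $1/x$ and $\log x$ that appears — in $\phi_{1,p}$, $\phi_{2,p}$, in the polar term $\bigl(1+\tfrac{1}{1+1/(\pi p)}\bigr)\tfrac{1}{xe^{-n}}$ inside Theorem \ref{genrama}, and in the Frullani-type integrals producing logarithms — and confirming the subtractions telescope exactly, with the $e^{2\pi p}Q_{2\pi p}(0)$ term from $\phi_{2,p}$ cancelling the analogous constant generated by $I_1^*$. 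A secondary subtlety is justifying the interchange of summation and integration: this needs the uniform convergence of the series for $\sigma_p$, $\sigma$ (available from the convergence statements cited after \eqref{eps} and \eqref{3.33}) together with the rapid decay $\left|\Gamma\!\left(\tfrac{-1+it}{4}\right)\right|^2=O(e^{-\pi t/2})$ on the relevant factor, which controls the tails. Once the integral side is identified with $\sqrt\a$ times the bracket, the first equality of \eqref{ramforgeneqn} is automatic by the $\a\leftrightarrow\b$ symmetry noted above, completing the proof. Finally, as a check, letting $p\to\infty$ and using \eqref{xiinfty} together with $\sigma\to1$, $\sigma_p(z)\to 1/(e^z-1)$, $C^{(i)}_p\to\gamma$, $\Phi_p\to 2\phi$ should collapse \eqref{ramforgeneqn} to Ramanujan's \eqref{ramfor}.
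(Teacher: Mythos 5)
Your proposal is correct and follows essentially the same route as the paper's proof, which runs the identical computation in the opposite direction: it represents $\phi_{1,p}$ and $\phi_{2,p}$ via \eqref{intDigamma12} and \eqref{intDigamma22}, sums over $n$ using the partial-fraction expansion of $1/(e^y-1)$, folds in $C_p^{(1)}$ and $C_p^{(2)}$ through Frullani's formula and Lemma \ref{cp2new}, and matches the outcome with the integral $H_p(\a)$ of Theorem \ref{genrama}. The one bookkeeping item you leave implicit is that the constant terms do not telescope exactly: after the matching there remains the integral $\int_0^\infty\bigl(\tfrac{1}{t\sqrt{\a}(e^{2\pi t/\a}-1)}-\tfrac{\sqrt{\a}}{2\pi t^2}+\tfrac{e^{-t/\a}}{2t\sqrt{\a}}\bigr)\,dt$, which the paper disposes of by citing that it vanishes.
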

As the reader may have guessed by now, if we let $p\to\infty$ in \eqref{ramforgeneqn}, we get \eqref{ramfor}. However, letting $p\to 0$ gives an interesting new result stated below.
\begin{corollary}\label{p220p0}
Let
\begin{equation}\label{p220p0psi}
\tau(x):=\frac{1}{2}\left(\psi\left(1+\frac{x}{2} \right) -\psi\left(\frac{1+x}{2} \right) \right) +\psi\left(x+\frac{1}{2} \right) -\frac{1}{2 x}-\log x.
\end{equation}
If $\a$ and $\b$ are positive numbers such that $\a\b =1$, then
\begin{align}\label{p220p0eqn}
\sqrt{\a}&\Bigg(\frac{\gamma-\log (\pi \a)}{2\a}+\sum_{n=1}^{\infty}\tau(n\a) \Bigg) 
=\sqrt{\b}\Bigg(\frac{\gamma-\log (\pi \b)}{2\b}+\sum_{n=1}^{\infty}\tau(n\b) \Bigg) \nonumber\\
 &=-\frac{2}{\pi^{3/2}}\int_{0}^{\infty}\left|\Gamma\left( \frac{-1+it}{4}\right) \right|^{2}\Xi^{2}\left(\frac{t}{2} \right)\frac{\cos\left(\frac{1}{2}t\log \a \right)\left(\sqrt{2}\cos\left(\frac{1}{2}t\log2\right)-1\right)}{1+t^2}dt.
\end{align}
\end{corollary}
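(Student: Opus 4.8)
The plan is to obtain Corollary \ref{p220p0} by letting $p\to0^{+}$ in Theorem \ref{ramforgen} and identifying the limits of all the quantities appearing in \eqref{ramforgeneqn}.

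I would first treat the right-hand side. By \eqref{infzero}, $\lim_{p\to0}\omega_p(s)=(2^{s-1}+2^{-s}-1)\zeta(s)$, so from \eqref{10.10a} and \eqref{10.10} we get $\lim_{p\to0}\xi_p(s)=(2^{s-1}+2^{-s}-1)\xi(s)$ and hence $\lim_{p\to0}\Xi_p(t)=(2^{-1/2+it}+2^{-1/2-it}-1)\Xi(t)=(\sqrt{2}\cos(t\log2)-1)\Xi(t)$. Replacing $t$ by $t/2$ shows $\Xi_p(t/2)\Xi(t/2)\to(\sqrt{2}\cos(\tfrac12 t\log2)-1)\Xi^{2}(t/2)$, which is precisely the extra factor appearing on the right of \eqref{p220p0eqn}. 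Passing this limit through the integral in \eqref{ramforgeneqn} requires an integrable majorant for the integrand valid uniformly for $p$ in a right-neighbourhood of $0$; this I would obtain from the exponential decay of the Gamma factors in $|\Gamma(\tfrac{-1+it}{4})|^{2}$ and in $\Xi_p(t/2)=\xi_p(\tfrac12+\tfrac{it}{2})$ (the latter contributing a $\Gamma(\tfrac14+\tfrac{it}{4})$), together with a polynomial-in-$t$ bound for $\omega_p(\tfrac12+it)$ that is uniform as $p\to0^{+}$ — such a bound following from absolute convergence of $\zeta_p,\eta_p$ in a fixed right half-plane (the weights $\tfrac{p^{2}+\l_j^{2}}{p(p+1/\pi)+\l_j^{2}}$ are $\leq1$ and the roots of \eqref{ce2} satisfy $\l_j(p)>j-\tfrac12$), the functional equation \eqref{2.30}, and the Phragm\'{e}n--Lindel\"{o}f principle.

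Next I would compute $\lim_{p\to0}\Phi_p(x)$ for fixed $x>0$ from the series \eqref{Digamma0} and \eqref{9.13}. Since $\l_j=\l_j(p)\to j-\tfrac12$ and the weights tend to $1$ as $p\to0$, \eqref{Digamma0} gives $\lim_{p\to0}\psi_{1,p}(x)=-C^{(1)}_{0}-\tfrac1x+\sum_{k\geq0}\bigl(\tfrac{1}{k+1/2}-\tfrac{1}{x+k+1/2}\bigr)=-C^{(1)}_{0}-\tfrac1x+\psi(x+\tfrac12)-\psi(\tfrac12)$, where $C^{(1)}_{0}:=\lim_{p\to0}C^{(1)}_{p}$; from \eqref{gamma1} (or from the representation $C^{(1)}_{p}=\int_{0}^{\infty}(\s_p(t)-e^{-t}/t)\,dt$ established in Section \ref{prelim} together with $\s_p(z)\to\tfrac{1}{2\sinh(z/2)}$) one finds $C^{(1)}_{0}=\gamma+2\log2=-\psi(\tfrac12)$, so the constants cancel. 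Likewise, using $(s,0)_k=(-1)^{k}$, \eqref{9.13} gives $\lim_{p\to0}\psi_{2,p}(x)=-C^{(2)}_{0}-\tfrac1x-\log2+\tfrac12\bigl(\psi(1+\tfrac x2)-\psi(\tfrac{x+1}{2})\bigr)$, Lemma \ref{cp2new} (with $\s(z)\to-1$) yields $C^{(2)}_{0}:=\lim_{p\to0}C^{(2)}_{p}=-\int_{0}^{\infty}(e^{x}+1)^{-1}\,dx=-\log2$, and $\tfrac{2e^{2\pi p}}{1+1/(\pi p)}Q_{2\pi p}(0)\to0$ because $Q_{2\pi p}(0)=O(\log(1/p))$ while $\tfrac{1}{1+1/(\pi p)}=O(p)$. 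Inserting these into \eqref{phi1px} and \eqref{phi2px} (whose coefficients of $1/x$ and $\log x$ take their limiting values, since $\tfrac{1}{1+1/(\pi p)}\to0$) produces $\lim_{p\to0}\phi_{1,p}(x)=\psi(x+\tfrac12)-\log x$ and $\lim_{p\to0}\phi_{2,p}(x)=\tfrac12\bigl(\psi(1+\tfrac x2)-\psi(\tfrac{x+1}{2})\bigr)-\tfrac1{2x}$, so $\lim_{p\to0}\Phi_p(x)=\tau(x)$ with $\tau$ as in \eqref{p220p0psi}.

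Finally I would assemble the constant term and justify the passages to the limit. The constant collapses as $C^{(1)}_{p}+C^{(2)}_{p}\to\gamma+\log2$ and $\bigl(1+\tfrac{1}{1+1/(\pi p)}\bigr)\log(2\pi\a)\to\log(2\pi\a)$, so the bracket in \eqref{ramforgeneqn} tends to $\tfrac{\gamma+\log2-\log(2\pi\a)}{2\a}=\tfrac{\gamma-\log(\pi\a)}{2\a}$, which is exactly what appears in \eqref{p220p0eqn}. To move $\lim_{p\to0}$ inside $\sum_{n\geq1}\Phi_p(n\a)$, the cleanest route is to observe that \eqref{Digamma1}, \eqref{Digamma2} together with \eqref{phi1px}, \eqref{phi2px} collapse to the closed forms $\phi_{1,p}(x)=-I_1(x)$ and $\phi_{2,p}(x)=-I_1^{*}(x)$, whence by \eqref{intDigamma12} and \eqref{intDigamma22}, $\Phi_p(x)=-2\int_{0}^{\infty}\tfrac{t\,dt}{(t^{2}+x^{2})(\s(t)e^{2\pi t}-1)}-2\int_{0}^{\infty}\tfrac{t\,\s_p(2\pi t)}{t^{2}+x^{2}}\,dt$; using $\s_p(2\pi t)\leq\tfrac{1}{2\sinh(\pi t)}$, $\int_{0}^{\infty}t\,\s_p(2\pi t)\,dt=\tfrac14 B_2^{(p)}$ (bounded by \eqref{genber}), and an elementary lower bound for $|\s(t)e^{2\pi t}-1|$, one gets $\Phi_p(x)=O(x^{-2})$ uniformly for $p$ near $0$, so dominated convergence applies both to the series and to the right-hand integral. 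The step I expect to be the main obstacle is precisely establishing these uniformities in $p$ as $p\to0^{+}$ — above all the uniform polynomial bound for $\omega_p$ on the critical line needed for the right-hand integral; by contrast the closed forms $\phi_{1,p}=-I_1$, $\phi_{2,p}=-I_1^{*}$ make the left-hand estimates routine.
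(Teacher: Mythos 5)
Your proposal is correct and follows essentially the same route as the paper's first proof: let $p\to0$ in Theorem \ref{ramforgen}, compute $C_0^{(1)}=\gamma+\log4$, $C_0^{(2)}=-\log2$, $\lim_{p\to0}\Phi_p(x)=\tau(x)$, and $\Xi_0(t)=\left(\sqrt{2}\cos(t\log2)-1\right)\Xi(t)$. The only cosmetic difference is that you take the limit of $\psi_{2,p}$ through the series \eqref{9.13} with $(1,0)_k=(-1)^k$, whereas the paper uses the integral representation \eqref{intDigamma22} and evaluates $\int_0^\infty\frac{te^{\pi t}\,dt}{(t^2+x^2)(e^{2\pi t}-1)}$ in closed form; both yield the same limits, and your attention to uniformity in $p$ is if anything more careful than the paper's.
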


\section{Proof of Theorem \ref{Ramanujantype1} and associated results}\label{proofs}

\begin{proof}[Theorem \textup{\ref{Ramanujantype1}}][]
We prove the result only for $m\in\mathbb{Z}^{+}$. For $m\in\mathbb{Z}^{-}$, it can be similarly proved. (In fact, it is simpler to prove the result in this case.) 

We begin with stating the Mellin transform of $\frac{1}{\s(t)e^{2\pi x}-1}$ \cite[p.~32, Chapter 1, Equation (74)]{koshliakov3}, namely for $\textup{Re}(s)>1$,
\begin{align*}
\zeta_{p}(1-s)=2\cos\left(\frac{\pi s}{2}\right)\int_{0}^{\infty}\frac{x^{s-1}}{\s(x)e^{2\pi x}-1}dx,
\end{align*}
where $\sigma(z)$ is defined in \eqref{sigmap}. Then for $\textup{Re}(s)=c>1$,
\begin{align}\label{inverse1}
\frac{1}{\s\left(\frac{x}{2\pi} \right)e^{x}-1}= \frac{1}{2\pi i}\int_{c-i\infty}^{c+i\infty}\frac{\zeta_p(1-s)}{2\cos\left(\frac{\pi s}{2}\right)}\left(\frac{x}{2\pi} \right)^{-s}dx.
\end{align}
Next, let us consider
\begin{align}\label{eis1}
\sum_{j=1}^{\infty}\frac{p^2+\lambda^2_{j}}{p\left(p+\frac{1}{\pi}\right)+\lambda^2_j}\cdot \frac{\l_j^{-2m-1}}{\s\left(\frac{\l_j x}{2\pi} \right)e^{\l_j x}-1}.
\end{align}
Observe that if we let $p \to \infty$, it reduces to 
\begin{align*}
\sum_{j=1}^{\infty}\frac{j^{-2m-1}}{e^{jx}-1},
\end{align*}
which, for $m<-1$, is essentially the Eisenstein series of weight $-2m$ on $\textup{SL}_{2}\left(\mathbb{Z}\right)$. Also, letting $p \to 0$, we get  
\begin{align*}
-\sum_{j=1}^{\infty}\frac{\left(j-\frac{1}{2}\right)^{-2m-1}}{e^{\left(j-\frac{1}{2}\right)x}+1}.
\end{align*}
From \eqref{inverse1} and \eqref{eis1}, for Re$(s)=d>1,$ we have
\begin{align}\label{doublezeta}
&\sum_{j=1}^{\infty}\frac{p^2+\lambda^2_{j}}{p\left(p+\frac{1}{\pi}\right)+\lambda^2_j}\cdot \frac{\l_j^{-2m-1}}{\s\left(\frac{\l_j x}{2\pi} \right)e^{\l_j x}-1}\nonumber\\
&= \sum_{j=1}^{\infty}\frac{p^2+\lambda^2_{j}}{p\left(p+\frac{1}{\pi}\right)+\lambda^2_j}\frac{\l_j^{-2m-1}}{2\pi i}\int_{d-i\infty}^{d+i\infty}\frac{\zeta_p(1-s)}{2\cos\left(\frac{\pi s}{2}\right)}\left(\frac{\l_jx}{2\pi} \right)^{-s}dx\nonumber\\
&=\frac{1}{2\pi i}\int_{d-i\infty}^{d+i\infty}\frac{\zeta_p(1-s)\zeta_{p}(s+2m+1)}{2\cos\left(\frac{\pi s}{2}\right)}\left(\frac{x}{2\pi} \right)^{-s}dx,
\end{align}
where in the last step we interchanged the order of summation and integration because of absolute and uniform convergence and employed the series representation of $\zeta_p(s+2m+1)$ given in \eqref{zps}.

We wish to shift the line of integration from Re$(s)=d$ to Re$(s)=-d_1$, where $2m+1<d_1<2m+2$. To that end, construct the rectangular contour $ABCD$: 
$$A(d - i T), ~ B(d + i T),~ C(-d_1 + i T),~ D(-d_1 - i T).$$
The integrand has simple poles at $0,~1,~ -2m$ and $-2j-1, ~0\leq j \leq m$ all of which lie inside the contour. If $R_a$ denotes the pole of the integrand at $a$, by Cauchy's residue theorem, 
\begin{align*}
\frac{1}{2\pi i}\left( \int_{d- i\infty}^{d+i \infty}+\int_{d+ i\infty}^{-d_1+i \infty}+\int_{-d_1+ i\infty}^{-d_1-i \infty}+\int_{-d_1- i\infty}^{d-i \infty}\right)\mathfrak{H}(s)\ ds =R_{-2m}+R_0+R_1+\sum_{j=0}^{m}R_{-2j-1},
\end{align*} 
where
\begin{align*}
\mathfrak{H}(s):= \frac{\zeta_p(1-s)\zeta_{p}(s+2m+1)}{2\cos\left(\frac{\pi s}{2}\right)}\left(\frac{x}{2\pi} \right)^{-s}.
\end{align*}
Now the residues of $\mathfrak{H}(s)$ can be evaluated as given below.
\begin{align}
R_0 &=\lim_{s\to 0}s\mathfrak{H}(s) =-\frac{1}{2}\zeta_p(2m+1),\label{R1}\\
R_1 &=\lim_{s\to 1}(s-1)\mathfrak{H}(s)=\frac{1}{\left(1+\frac{1}{\pi p}\right)}\frac{1}{ x}\zeta_p(2m+2),\label{R2}\\
R_{-2m} &=\lim_{s\to -2m}(s+2m)\mathfrak{H}(s) =\frac{(-1)^m}{2}\left(\frac{x}{2\pi} \right)^{2m}\zeta_p(2m+1),\label{R3}\\
R_{-2j-1} &=\lim_{s\to -2j-1}(s+2j+1)\mathfrak{H}(s)=\frac{(-1)^j}{\pi}\left(\frac{x}{2\pi} \right)^{2j+1}\zeta_p(2+2j)\zeta_{p}(2m-2j),\label{R4}
\end{align}
where, to calculate $R_1$, we used the fact \cite[p.~22, Chapter 1, Equation (34)]{koshliakov3}
\begin{align}\label{r1a}
\zeta_p(0)=-\frac{1}{2}\frac{1}{\left(1+\frac{1}{\pi p}\right)}.
\end{align}
The fact that
\begin{align}
\int_{d+ iT}^{-d_1+iT}\mathfrak{H}(s)\ ds ~\textup{and}~\int_{-d_1- iT}^{d-iT}\mathfrak{H}(s)\ ds \to 0, ~ as ~|T| \to \infty, \nonumber
\end{align}
follows from the elementary bound \cite[p.~24, Chapter 1, Equation (43)]{koshliakov3}
\begin{align}
\zeta_{p}(\s+it) =\mathcal{O}\left(t^{\frac{1}{2}-\s} \right)\hspace{5mm}(\s<0),\nonumber
\end{align}
the functional equation \eqref{2.30} and Stirling's formula for $\G(\s+it)$ in a vertical strip $a\leq\s\leq b$ given by \cite[p.~224]{cop}
\begin{equation}
  |\Gamma(s)|=\sqrt{2\pi}|t|^{\sigma-\frac{1}{2}}e^{-\frac{1}{2}\pi |t|}\left(1+O\left(\frac{1}{|t|}\right)\right)\nonumber
\end{equation}
as $|t|\to \infty$. Hence
\begin{align}\label{cauchy}
\frac{1}{2\pi i}\int_{d- i\infty}^{d+i \infty}\mathfrak{H}(s)\ ds =R_{-2m}+R_0+R_1+\sum_{j=0}^{m}R_{-2j-1}+\frac{1}{2\pi i}\int_{-d_1- i\infty}^{-d_1+i \infty}\mathfrak{H}(s)\ ds.
\end{align}

Let us now turn to the line integral 
\begin{align*}
\int_{-d_1 -i \infty}^{-d_1 +i \infty}\frac{\zeta_p(1-s)\zeta_{p}(s+2m+1)}{2\cos\left(\frac{\pi s}{2}\right)}\left(\frac{x}{2\pi} \right)^{-s},
\end{align*}
and employ the change of variable $s \to -s-2m$ to obtain
\begin{align}
\int_{-d_1  -i \infty}^{-d_1 +i \infty}\mathfrak{H}(s)\, ds&=\left(-\frac{x^2}{4\pi^2} \right)^{m}\int_{c-i \infty}^{c+i \infty}\frac{\zeta_{p}(s+2m+1)\zeta_p(1-s)}{2\cos\left(\frac{\pi s}{2}\right)}\left(\frac{x}{2\pi} \right)^{s}\, ds,\nonumber 
\end{align}
where $1<c=\textup{Re}(s)<2.$

Again invoking the series representation of $\zeta_{p}(s+2m+1)$ and interchanging the order of summation and integration on the right-hand side of the above equation, we get
\begin{align}\label{finallininti1}
\frac{1}{2\pi i}\int_{-d_1 -i \infty}^{-d_1+i \infty}&\mathfrak{H}(s)\, ds\nonumber\\
&=\left(-\frac{x^2}{4\pi^2} \right)^{m}\sum_{j=1}^{\infty}\frac{p^2+\lambda^2_{j}}{p\left(p+\frac{1}{\pi}\right)+\lambda^2_j}\frac{\l_j^{-2m-1}}{2\pi i}\int_{c-i\infty}^{c+i\infty}\frac{\zeta_p(1-s)}{2\cos\left(\frac{\pi s}{2}\right)}\left(\frac{2\pi \l_j}{x} \right)^{-s}\, ds\nonumber\\
& =\left(-\frac{x^2}{4\pi^2} \right)^{m}\sum_{j=1}^{\infty}\frac{p^2+\lambda^2_{j}}{p\left(p+\frac{1}{\pi}\right)+\lambda^2_j}\cdot \frac{\l_j^{-2m-1}}{\s\left(\frac{2\pi \l_j}{x} \right)e^{\frac{4\pi^2\l_j}{ x}}-1},
\end{align} 
where in the last-step, we used \eqref{inverse1}.

Thus, from \eqref{doublezeta}, \eqref{R1}, \eqref{R2}, \eqref{R3}, \eqref{R4}, \eqref{cauchy} and \eqref{finallininti1}, we arrive at 
\begin{align*}
&\sum_{j=1}^{\infty}\frac{p^2+\lambda^2_{j}}{p\left(p+\frac{1}{\pi}\right)+\lambda^2_j}\cdot \frac{\l_j^{-2m-1}}{\s\left(\frac{\l_j x}{2\pi} \right)e^{\l_j x}-1}+\frac{1}{2}\zeta_p(2m+1)\nonumber\\
&=\left(-\frac{x^2}{4\pi^2} \right)^{m}\left(\sum_{j=1}^{\infty}\frac{p^2+\lambda^2_{j}}{p\left(p+\frac{1}{\pi}\right)+\lambda^2_j}\cdot \frac{\l_j^{-2m-1}}{\s\left(\frac{2\pi \l_j}{x} \right)e^{\frac{4\pi^2\l_j}{ x}}-1}+\frac{1}{2}\zeta_p(2m+1)\right)\nonumber\\
& \quad +\frac{1}{\left(1+\frac{1}{\pi p}\right)}\frac{1}{ x}\zeta_p(2m+2)+\frac{1}{\pi}\sum_{j=0}^{m}(-1)^j\zeta_p(2j+2)\zeta_{p}(2m-2j)\left(\frac{x}{2\pi} \right)^{2j+1}\nonumber\\
&=\left(-\frac{x^2}{4\pi^2} \right)^{m}\left(\sum_{j=1}^{\infty}\frac{p^2+\lambda^2_{j}}{p\left(p+\frac{1}{\pi}\right)+\lambda^2_j}\cdot \frac{\l_j^{-2m-1}}{\s\left(\frac{2\pi \l_j}{x} \right)e^{\frac{4\pi^2\l_j}{ x}}-1}+\frac{1}{2}\zeta_p(2m+1)\right)\nonumber\\
& \quad+\frac{1}{\pi}\sum_{j=0}^{m+1}(-1)^{j+1}\zeta_p(2j)\zeta_{p}(2m-2j+2)\left(\frac{x}{2\pi} \right)^{2j-1},
\end{align*}
where in the last step, we used \eqref{r1a}. Invoking \cite[p.~22, Chapter 1, Equation (38)]{koshliakov3}, 
\begin{align}
\zeta_p(2j)=\frac{(2\pi)^{2j}}{2(2j)!}(-1)^{j+1}B_{2j}^{(p)},\nonumber
\end{align}
we derive
\begin{align}
\sum_{j=1}^{\infty}&\frac{p^2+\lambda^2_{j}}{p\left(p+\frac{1}{\pi}\right)+\lambda^2_j}\cdot \frac{\l_j^{-2m-1}}{\s\left(\frac{\l_j x}{2\pi} \right)e^{\l_j x}-1}+\frac{1}{2}\zeta_p(2m+1)\nonumber\\
&=\left(-\frac{x^2}{4\pi^2} \right)^{m}\left(\sum_{j=1}^{\infty}\frac{p^2+\lambda^2_{j}}{p\left(p+\frac{1}{\pi}\right)+\lambda^2_j}\cdot \frac{\l_j^{-2m-1}}{\s\left(\frac{2\pi \l_j}{x} \right)e^{\frac{4\pi^2\l_j}{ x}}-1}+\frac{1}{2}\zeta_p(2m+1)\right)\nonumber\\
&\quad+\frac{2\pi^2(-1)^m(2\pi)^{2m}}{x}\sum_{j=0}^{m+1}(-1)^{j}\frac{B_{2j}^{(p)}B_{2m-2j+2}^{(p)}}{(2j)!(2m-2j+2)!}\left(\frac{x}{2\pi} \right)^{2j}.\nonumber
\end{align}
Now let $x=2\a$ and $\b= \pi^2/\a$ in the above equation and then multiply both sides of the resulting equation by $\a^{-m}$ so as to deduce
\begin{align}
&\a^{-m}\Bigg\{\frac{1}{2}\zeta_p(2m+1)+\sum_{j=1}^{\infty}\frac{p^2+\lambda^2_{j}}{p\left(p+\frac{1}{\pi}\right)+\lambda^2_j}\cdot \frac{\l_j^{-2m-1}}{\s\left(\frac{\l_j \a}{\pi} \right)e^{2\a\l_j }-1}\Bigg\}\nonumber\\
&=\left(-\b \right)^{-m}\left\{\frac{1}{2}\zeta_p(2m+1)+\sum_{j=1}^{\infty}\frac{p^2+\lambda^2_{j}}{p\left(p+\frac{1}{\pi}\right)+\lambda^2_j}\cdot \frac{\l_j^{-2m-1}}{\s\left(\frac{ \l_j\b}{\pi} \right)e^{2\b\l_j}-1}\right\}\nonumber\\
& \quad+2^{2m}(-1)^m\sum_{j=0}^{m+1}(-1)^{j}\frac{B_{2j}^{(p)}B_{2m-2j+2}^{(p)}}{(2j)!(2m-2j+2)!}\a^j\b^{m-j+1}.\nonumber
\end{align}
Finally replace $j$ by $m-j+1$ in the last sum on the right-hand side to complete the proof.
\end{proof}

\begin{proof}[Theorem \textup{\ref{dedekindgen}}][]
Since the proof of this result is similar to that of Theorem \ref{Ramanujantype1}, we omit most of the details. One begins by representing the first infinite series on the left-hand side as a line integral, and then note that the integrand has a double order pole at $s=0$ and simple poles as $-1$ and $1$. The rest of the proof follows by invoking the residue theorem.
\end{proof}
\begin{corollary}\label{1cor}
For $n\geq1$, and $\a\b=\pi^2$,
\begin{align*}
\a^{n+1}&\left\{\frac{1}{2}\zeta_{p}(-2n-1)+\sum_{j=1}^{\infty}\frac{p^2+\l_j^2}{p\left(p+\frac{1}{\pi}\right)+\l_j^2}\cdot\frac{\l_{j}^{2n+1}}{\s\left(\frac{\l_j \a}{\pi} \right)e^{2\a \l_j}-1}\right\}\\
&=(-\b)^{n+1}\left\{\frac{1}{2}\zeta_{p}(-2n-1)+\sum_{j=1}^{\infty}\frac{p^2+\l_j^2}{p\left(p+\frac{1}{\pi}\right)+\l_j^2}\cdot\frac{\l_{j}^{2n+1}}{\s\left(\frac{\l_j \b}{\pi} \right)e^{2\b \l_j}-1}\right\}.
\end{align*}
\end{corollary}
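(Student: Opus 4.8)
The plan is to read off Corollary \ref{1cor} from Theorem \ref{Ramanujantype1} by the substitution $m=-(n+1)$. Theorem \ref{Ramanujantype1} is valid for every $m\in\mathbb{Z}\setminus\{0\}$ and all $\alpha,\beta>0$ with $\alpha\beta=\pi^2$; since $n\geq 1$ we have $m=-(n+1)\leq -2$, so the substitution is legitimate. Under it, $-m=n+1$, $2m+1=-2n-1$ and $-2m-1=2n+1$, so the prefactors in \eqref{rt1eqn} turn into $\alpha^{n+1}$ and $(-\beta)^{n+1}$ and the two Lambert-type series become precisely those in the statement of the corollary.

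The one thing to verify is that the finite Bernoulli-type sum on the right of \eqref{rt1eqn} disappears. Its summation index $j$ runs from $0$ to $m+1=-n$, which is negative for $n\geq 1$; hence the sum is empty and contributes nothing. (This is exactly why the corollary excludes $n=0$: at $m=-1$ the term $j=0$ would survive, and $m=-1$ is the borderline value treated separately in the text.) It is also worth recording that every object is meaningful: $\zeta_p(-2n-1)$ denotes the value at $s=-2n-1$ of the analytic continuation of $\zeta_p$, which is holomorphic there, and the two series converge absolutely, the exponential factors $e^{2\alpha\lambda_j}$ and $e^{2\beta\lambda_j}$ in the denominators dominating the polynomial growth $\lambda_j^{2n+1}$ (the bounded factor $\sigma(\lambda_j\alpha/\pi)$ being harmless, and even at its sporadic poles merely forcing the corresponding term to vanish). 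Combining these remarks, the instance $m=-(n+1)$ of \eqref{rt1eqn} is literally the asserted identity.

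I do not expect any genuine obstacle, since the corollary is a transparent specialization of Theorem \ref{Ramanujantype1}; the only care needed is the index arithmetic above and the observation that the Bernoulli sum is vacuous precisely when $n\geq 1$. Should a self-contained derivation be preferred, one would rerun the contour-shift proof of Theorem \ref{Ramanujantype1} with $m<0$, where $\mathfrak{H}(s)$ picks up strictly fewer residues in the relevant region and no contribution from the negative odd integers arises---this is the simplification already flagged by the parenthetical remark at the start of that proof, ``it is simpler to prove the result in this case.''
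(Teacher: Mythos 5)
Your proposal is correct and coincides with the paper's own proof, which likewise substitutes $m=-n-1$ into Theorem \ref{Ramanujantype1} and observes that the finite Bernoulli sum on the right-hand side of \eqref{rt1eqn} is empty (its upper index $m+1=-n$ being negative for $n\geq1$). The additional remarks on convergence and on why $n=0$ is excluded are accurate but not needed beyond what the paper records.
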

\begin{proof}
Replace $m$ by $-n-1, n\geq1,$ in Theorem \ref{Ramanujantype1} and note that the finite sum on the right-hand side of \eqref{rt1eqn} vanishes.
\end{proof}
The above corollary, in turn, gives the result below.
\begin{corollary}\label{2cor}
For an even positive integer $m$, we have
\begin{align}
\sum_{j=1}^{\infty}\frac{p^2+\l_j^2}{p\left(p+\frac{1}{\pi}\right)+\l_j^2}\cdot\frac{\l_{j}^{2m+1}}{\s\left(\l_j \right)e^{2\pi \l_j}-1}=-\frac{\zeta_{p}(-2m-1)}{2}.
\end{align}
\end{corollary}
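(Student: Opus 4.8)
The plan is to obtain Corollary~\ref{2cor} as an immediate specialization of Corollary~\ref{1cor}. Since $m$ is an even positive integer we have $m\geq 2$, so Corollary~\ref{1cor} applies with its index $n$ replaced by $m$. First I would set $\a=\b=\pi$, which is permissible because this choice satisfies the required constraint $\a\b=\pi^2$. With $\a=\pi$ one has $\s\left(\frac{\l_j\a}{\pi}\right)=\s(\l_j)$ and $e^{2\a\l_j}=e^{2\pi\l_j}$, so the bracketed expressions on the two sides of the identity in Corollary~\ref{1cor} become the single quantity
\[
X:=\frac{1}{2}\zeta_{p}(-2m-1)+\sum_{j=1}^{\infty}\frac{p^2+\l_j^2}{p\left(p+\frac{1}{\pi}\right)+\l_j^2}\cdot\frac{\l_{j}^{2m+1}}{\s(\l_j)e^{2\pi \l_j}-1}.
\]

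Next I would exploit the parity of $m$. Corollary~\ref{1cor} (with $n=m$ and $\a=\b=\pi$) now reads $\pi^{m+1}X=(-\pi)^{m+1}X$; since $m$ is even, $m+1$ is odd, so $(-\pi)^{m+1}=-\pi^{m+1}$, whence $2\pi^{m+1}X=0$, i.e.\ $X=0$. Unwinding the definition of $X$ yields exactly the asserted identity
\[
\sum_{j=1}^{\infty}\frac{p^2+\l_j^2}{p\left(p+\frac{1}{\pi}\right)+\l_j^2}\cdot\frac{\l_{j}^{2m+1}}{\s(\l_j)e^{2\pi \l_j}-1}=-\frac{\zeta_{p}(-2m-1)}{2}.
\]

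There is essentially no obstacle in this argument; the only routine point worth recording is that the series defining $X$ converges absolutely for every integer $m\geq 1$: as $j\to\infty$ one has $\l_j\to\infty$ at a linear rate while $\s(\l_j)=\frac{p+\l_j}{p-\l_j}\to -1$, so $\s(\l_j)e^{2\pi\l_j}-1$ grows exponentially and dominates the polynomial factor $\l_{j}^{2m+1}$ (the prefactor $\frac{p^2+\l_j^2}{p(p+1/\pi)+\l_j^2}$ tends to $1$). This legitimizes manipulating the series as in Corollary~\ref{1cor}. It is also worth remarking why the hypothesis singles out even $m$: for odd $m$ the same specialization gives only the vacuous identity $X=X$, so no information is obtained. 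As a sanity check, letting $p\to\infty$ and taking $m=2$ recovers the classical evaluation $\sum_{j=1}^{\infty}j^{5}/(e^{2\pi j}-1)=\tfrac{1}{504}=-\tfrac{1}{2}\zeta(-5)$, which reflects the vanishing of the weight-$6$ Eisenstein series at $i$.
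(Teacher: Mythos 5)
Your proof is correct and is exactly the paper's argument: the paper likewise obtains this corollary by setting $\a=\b=\pi$ in Corollary \ref{1cor} and using that $m+1$ is odd when $m$ is even, forcing the common bracketed quantity to vanish. The added remarks on convergence and the $p\to\infty$ sanity check are accurate but not needed.
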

\begin{proof}
Let $\a=\b=\pi$ in Corollary \ref{1cor} and let $m$ be even.
\end{proof}
If we let $p \to \infty$ in Corollary \ref{2cor}, we get Glaisher's result \cite{glaisher1889}
\begin{align}
\sum_{j=1}^{\infty}\frac{j^{2m+1}}{e^{2\pi j}-1}=-\frac{\zeta(-2m-1)}{2}=\frac{B_{2m+2}}{4m+4}\hspace{5mm}(m\hspace{1mm}\text{even}),\nonumber
\end{align}
whereas if we let $p \to 0$, we obtain Apostol's result \cite[p.~25 Exercise 15(c)]{apostolmod}:
\begin{align}
\sum_{j=1}^{\infty}\frac{(2j-1)^{2m+1}}{e^{\pi (2j-1)}+1}=2^{2m}\lim_{p\to0}\zeta_p(-2m-1)=(2^{2m+1}-1)\frac{B_{2m+2}}{4m+4}\hspace{5mm}(m\hspace{1mm}\text{even}).\nonumber
\end{align}

Note that one cannot let $m=0$ in Corollary \ref{1cor}. The corresponding result in this scenario is given below.
\begin{theorem}\label{e2gen}
For $\a\b=\pi^2,$
\begin{align}
\a&\left\{\frac{1}{2}\zeta_{p}(-1)+\sum_{j=1}^{\infty}\frac{p^2+\l_j^2}{p\left(p+\frac{1}{\pi}\right)+\l_j^2}\cdot\frac{\l_{j}}{\s\left(\frac{\l_j \a}{\pi} \right)e^{2\a \l_j}-1}\right\}\nonumber\\
&=-\b\left\{\frac{1}{2}\zeta_{p}(-1)+\sum_{j=1}^{\infty}\frac{p^2+\l_j^2}{p\left(p+\frac{1}{\pi}\right)+\l_j^2}\cdot\frac{\l_{j}}{\s\left(\frac{\l_j \b}{\pi} \right)e^{2\b \l_j}-1}\right\}-\frac{1}{4}\frac{1}{\left(1+\frac{1}{\pi p}\right)^2}.\nonumber
\end{align}
\end{theorem}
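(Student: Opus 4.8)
The plan is to read off Theorem~\ref{e2gen} as the boundary case $m=-1$ of the master transformation \eqref{rt1eqn} in Theorem~\ref{Ramanujantype1}. Corollary~\ref{1cor} records the instances $m=-n-1$ with $n\geq1$, and for those the upper index $m+1=-n$ of the finite sum $\sum_{j=0}^{m+1}$ in \eqref{rt1eqn} is negative, so that sum is empty --- which is exactly why no ``extra'' term appears there. The content of the present theorem is that at the next value $n=0$, i.e.\ $m=-1$, the upper index is $m+1=0$, so precisely one term of the finite sum survives and must be retained, and that surviving term is what accounts for the constant on the right of \eqref{e2gen}.

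First I would substitute $m=-1$ in \eqref{rt1eqn}. The prefactors become $\alpha^{-m}=\alpha$ and $(-\beta)^{-m}=-\beta$, the zeta value becomes $\zeta_p(2m+1)=\zeta_p(-1)$, and the exponent of $\lambda_j$ becomes $-2m-1=1$; this already reproduces the two Lambert-type series of \eqref{e2gen}, each paired with $\tfrac12\zeta_p(-1)$ and the bracket weighted respectively by $\alpha$ and $-\beta$. It then remains only to evaluate the finite-sum contribution, namely
\begin{align*}
-2^{2m}\sum_{j=0}^{m+1}\frac{(-1)^{j}B_{2j}^{(p)}B_{2m-2j+2}^{(p)}}{(2j)!\,(2m-2j+2)!}\,\alpha^{m-j+1}\beta^{j}\bigg|_{m=-1}
&=-\frac14\cdot\frac{B_0^{(p)}B_0^{(p)}}{0!\,0!}\,\alpha^{0}\beta^{0}\\
&=-\frac14\,\frac{1}{\left(1+\frac{1}{\pi p}\right)^{2}},
\end{align*}
the only term being $j=0$, and where we used $B_0^{(p)}=\big(1+\tfrac1{\pi p}\big)^{-1}$ from \eqref{genber} together with $2^{2m}=2^{-2}=\tfrac14$. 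This is precisely the constant appearing on the right-hand side of \eqref{e2gen}, so collecting the pieces finishes the proof.

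Equivalently, and more in the spirit of the proof of Theorem~\ref{Ramanujantype1}, one may rerun the contour argument directly with $m=-1$: insert the Mellin representation \eqref{inverse1}, so that the Lambert series at argument $x=2\alpha$ equals $\frac{1}{2\pi i}\int_{(d)}\frac{\zeta_p(1-s)\zeta_p(s-1)}{2\cos(\pi s/2)}\big(\tfrac{x}{2\pi}\big)^{-s}ds$ with $d>2$, then shift the line and apply the change of variable $s\mapsto -s-2m=-s+2$, which interchanges the factors $\zeta_p(1-s)$ and $\zeta_p(s-1)$ and carries the contour into the half-plane of convergence of the ``$\beta$-series''. Here only the poles at $s=1$ (a zero of $\cos(\pi s/2)$) and at $s=2$ (the pole of $\zeta_p(s-1)$, i.e.\ the analogue of the pole at $s=-2m$) are crossed; the residue at $s=1$ is computed using $\zeta_p(0)=-\tfrac12\big(1+\tfrac1{\pi p}\big)^{-1}$ from \eqref{r1a}, and the residue at $s=2$ using the fact that $\zeta_p$ has residue $1$ at its simple pole. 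Assembling these residues reproduces \eqref{e2gen}.

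The main point to watch is bookkeeping, not analysis. One must confirm that $m=-1$ genuinely falls within the scope of Theorem~\ref{Ramanujantype1}, whose written proof treats only $m\in\mathbb{Z}^{+}$ in detail but asserts (correctly) that $m\in\mathbb{Z}^{-}$ is analogous and in fact simpler; and, crucially, one must \emph{not} discard the lone surviving term $j=0$ of the finite sum, as one legitimately does in Corollary~\ref{1cor} where that sum is empty. No new convergence or analytic-continuation difficulty arises beyond those already resolved in the proof of Theorem~\ref{Ramanujantype1}.
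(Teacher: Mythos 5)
Your proposal is correct and is essentially the paper's own proof: the authors prove Theorem \ref{e2gen} by setting $m=-1$ in Theorem \ref{Ramanujantype1} and using the value $B_0^{(p)}=\bigl(1+\tfrac{1}{\pi p}\bigr)^{-1}$ from \eqref{genber}, exactly as you do, with the lone $j=0$ term of the finite sum producing the constant $-\tfrac14\bigl(1+\tfrac{1}{\pi p}\bigr)^{-2}$. Your additional sketch of rerunning the contour argument at $m=-1$ is a harmless (and plausible) supplement, but the substitution argument alone is what the paper records and it is complete.
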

\begin{proof}
Let $m=-1$ in Theorem \ref{Ramanujantype1} and use the representation for $B_{0}^{(p)}$ from \eqref{genber}.
\end{proof}
Letting $p\to\infty$ in the above theorem leads to
\begin{align} 
\a\sum_{j=1}^{\infty}\frac{j}{e^{2j\a}-1}+\b\sum_{j=1}^{\infty}\frac{j}{e^{2j\b}-1}&=\frac{\a+\b}{24}-\frac{1}{4},\nonumber
\end{align}
which is equivalent to the transformation formula for the Eisenstein series $E_2$ on the full modular group $\textup{SL}_{2}\left(\mathbb{Z}\right)$, 

Also, if we let $p\to0$ in the above theorem, we get a nice result, namely, for $\a\b=\pi^2$,
\begin{align}
\a\left\{\frac{1}{24}-\sum_{j=1}^{\infty}\frac{2j-1}{e^{\a(2j-1)}+1}\right\}=-\b\left\{\frac{1}{24}-\sum_{j=1}^{\infty}\frac{2j-1}{e^{\b(2j-1)}+1}\right\}.\nonumber
\end{align}

\begin{corollary}\label{cor5.4}
\begin{align}
\sum_{j=1}^{\infty}\frac{p^2+\l_j^2}{p\left(p+\frac{1}{\pi}\right)+\l_j^2}\cdot\frac{\l_{j}}{\s\left(\l_j\right)e^{2\pi \l_j}-1}=-\frac{1}{2}\zeta_{p}(-1)-\frac{1}{8\pi}\frac{1}{\left(1+\frac{1}{\pi p}\right)^2}.\nonumber
\end{align}
\end{corollary}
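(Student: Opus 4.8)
The plan is to obtain this simply by specializing Theorem \ref{e2gen} at the symmetric point $\a=\b=\pi$, which is admissible since it satisfies the constraint $\a\b=\pi^2$. With this choice one has $\a/\pi=\b/\pi=1$, so $\s(\l_j\a/\pi)=\s(\l_j\b/\pi)=\s(\l_j)$ and $e^{2\a\l_j}=e^{2\b\l_j}=e^{2\pi\l_j}$; hence both the bracketed quantities appearing on the two sides of the identity in Theorem \ref{e2gen} collapse to one and the same expression, namely
\[
S:=\frac{1}{2}\zeta_{p}(-1)+\sum_{j=1}^{\infty}\frac{p^2+\l_j^2}{p\left(p+\frac{1}{\pi}\right)+\l_j^2}\cdot\frac{\l_{j}}{\s\left(\l_j\right)e^{2\pi \l_j}-1}.
\]

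First I would substitute $\a=\b=\pi$ into the statement of Theorem \ref{e2gen}. The resulting relation reads $\pi S=-\pi S-\tfrac14\big(1+\tfrac{1}{\pi p}\big)^{-2}$, a single linear equation in $S$. Solving it gives $2\pi S=-\tfrac14\big(1+\tfrac{1}{\pi p}\big)^{-2}$, i.e. $S=-\tfrac{1}{8\pi}\big(1+\tfrac{1}{\pi p}\big)^{-2}$. Unpacking the definition of $S$ and moving $\tfrac12\zeta_p(-1)$ to the right-hand side then yields exactly
\[
\sum_{j=1}^{\infty}\frac{p^2+\l_j^2}{p\left(p+\frac{1}{\pi}\right)+\l_j^2}\cdot\frac{\l_{j}}{\s\left(\l_j\right)e^{2\pi \l_j}-1}=-\frac{1}{2}\zeta_{p}(-1)-\frac{1}{8\pi}\frac{1}{\left(1+\frac{1}{\pi p}\right)^2},
\]
which is the claimed corollary.

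There is no genuine obstacle here: the only point requiring (trivial) care is to note that the series in Theorem \ref{e2gen} converges for $\a=\b=\pi$ — which it does, since $\s(\l_j)e^{2\pi\l_j}-1\to\infty$ like $e^{2\pi\l_j}$ and the coefficients $\frac{p^2+\l_j^2}{p(p+1/\pi)+\l_j^2}$ are bounded — so that the specialization is legitimate and the manipulation of moving terms across the equation is valid. Everything else is elementary algebra.
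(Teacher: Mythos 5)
Your proposal is correct and follows exactly the paper's own route: the paper likewise obtains the corollary by setting $\a=\b=\pi$ in Theorem \ref{e2gen} and solving the resulting linear equation for the common bracketed quantity. The algebra checks out, so nothing further is needed.
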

\begin{proof}
The result follows by simply taking $\a=\b=\pi$ in Theorem \ref{e2gen}.
\end{proof}
Letting $p \to \infty$ in Corollary \ref{cor5.4}, we get 
\begin{align}
\sum_{j=1}^{\infty}\frac{j}{e^{2\pi j}-1}=\frac{1}{24}-\frac{1}{8\pi},\nonumber
\end{align}
whereas letting $p \to 0$ gives
\begin{align}
\sum_{j=1}^{\infty}\frac{2j-1}{e^{\pi (2j-1)}+1}=\frac{1}{24},\nonumber
\end{align}
using the fact $\lim_{p\to0}\zeta_p(-1)=(2^{-1}-1)\zeta(-1)=\frac{1}{24}$.

\section{Proof of Theorem \ref{ramforgen} and associated results}
We first prove a lemma which will be used in the sequel.
\begin{proof}[Lemma \textup{\ref{cp2new}}][]

Using Frullani's integral, 
\begin{align}
\log n =\int_{0}^{\infty}\frac{e^{-x}-e^{-n x}}{x}\ dx,\nonumber
\end{align}
and the identity \cite[p.~47, Chapter 2]{koshliakov3} 
\begin{align*}
\sum_{k=1}^{n-1}\frac{(1,2\pi pk)_k}{k}=\int_{0}^{\infty}\frac{\s\left(\frac{-x}{2\pi}\right)e^{-x}-\left( \s\left(\frac{-x}{2\pi}\right)\right)^ne^{-nx}}{1-\s\left(\frac{-x}{2\pi}\right)e^{-x}}\, dx,
\end{align*}
where
$$\s\left(\frac{-x}{2\pi}\right)=\frac{2\pi p-x}{2\pi p+x},$$
in \eqref{gamma2}, it is easily gleaned that  
\begin{align}
C_p^{(2)}
&=\lim_{n \to +\infty}\left\{ \int_{0}^{\infty}\frac{\s\left(\frac{-x}{2\pi}\right)e^{-x}-\left( \s\left(\frac{-x}{2\pi}\right)\right)^ne^{-nx}}{1-\s\left(\frac{-x}{2\pi}\right)e^{-x}}\, dx-\frac{1}{\left(1+\frac{1}{\pi p}\right)}\int_{0}^{\infty}\frac{e^{-x}-e^{-n x}}{x}\ dx\right\} \nonumber \\
&=\int_{0}^{\infty}\left\{\frac{1}{\s\left(\frac{x}{2\pi}\right)e^{x}-1}-\frac{1}{\left(1+\frac{1}{\pi p}\right)}\frac{e^{-x}}{x}\right\}\ dx,\nonumber
\end{align}
since $\s\left(\frac{-x}{2\pi}\right)<1$.
\end{proof}

\begin{proof}[Theorem \textup{\ref{genrama}}][]
Let $H_p(\a)$ denote the integral on the right-hand side of \eqref{Fn}, obtained after letting $n=\tfrac{1}{2}\log\a$, that is,
\begin{align}\label{hpas}
H_{p}(\a)&:=\int_{0}^{\infty}\left( \left(\s_p\left(x\sqrt{\a}\right)+\frac{1}{(\s\left( \frac{x\sqrt{\a}}{2\pi}\right)e^{x\sqrt{\a}}-1)} \right)-\left(1+\frac{1}{1+\frac{1}{\pi p}}\right)\frac{1}{x\sqrt{\a}}\right)\nonumber\\
&\qquad\qquad\times\left(\frac{1}{e^{x/\sqrt{\a}}-1} -\frac{1}{x/\sqrt{\a}}\right)dx\nonumber\\
&=\frac{2\pi}{\sqrt{\a}}\int_{0}^{\infty}\left( \left(\s_p(2\pi t)+\frac{1}{(\s(t)e^{2\pi t}-1)} \right)-\left(1+\frac{1}{1+\frac{1}{\pi p}}\right)\frac{1}{2\pi t}\right)\nonumber\\
&\qquad\qquad\times\left(\frac{1}{e^{2\pi t/\a}-1} -\frac{\a}{2\pi t}\right)\ dt,
\end{align}
where in the last step we employed the change of variable $x=2\pi t/\sqrt{\a}$. Now let
\begin{align}
f_p(t)&:=\left( \left(\s_p(2\pi t)+\frac{1}{(\s(t)e^{2\pi t}-1)} \right)-\left(1+\frac{1}{1+\frac{1}{\pi p}}\right)\frac{1}{2\pi t}\right),\nonumber\\
g(t)&:=\frac{1}{e^{2\pi t/\a}-1} -\frac{\a}{2\pi t}.\nonumber
\end{align}
If $\mathfrak{F}_p(s)$ and $\mathfrak{G}(s)$ denote the Mellin transforms of $f_p(t)$ and $g(t)$ respectively, then Parseval's identity \cite[p. 83, Equation (3.1.11)]{kp} gives
\begin{equation}\label{par}
\int_{0}^{\infty}f_p(t)g(t)\, dt=\frac{1}{2\pi i}\int_{c-i\infty}^{c+i\infty}\mathfrak{F}_p(s)\mathfrak{G}(1-s)\, ds,
\end{equation}
provided $\mathfrak{F}_p(1-c-it)\in L(-\infty, \infty)$ and $x^{c-1}g(x)\in L[0,\infty)$ and the integral on the left is absolutely convergent. To that end, note that Koshliakov \cite[p.~40, Chapter 2, Equation (25)]{koshliakov3} has shown that for Re$(s)>1$,
\begin{align*}
\s_p(2\pi t)&=\frac{1}{2\pi i}\int_{\b- i \infty}^{\b +i \infty}\Gamma(s)\zeta_{p}(s)\frac{ds}{ (2\pi t)^s},
\end{align*}
whence, for $0<c=\textup{Re}(s)<1$,
\begin{align}\label{sigmapmellin}
\s_p(2\pi t)-\frac{1}{2\pi t}&=\frac{1}{2\pi i}\int_{c-i\infty}^{c+i\infty}\Gamma(s)\zeta_{p}(s)\frac{ds}{ (2\pi t)^s}.
\end{align}
Similarly, using \cite[p.~45, Chapter 2, Equation (39)]{koshliakov3}, for $0<c=\textup{Re}(s)<1$,we have
\begin{align}\label{sigmamellin}
\frac{1}{\s(t)e^{2\pi t}-1}-\frac{1}{\left(1+\frac{1}{\pi p}\right)}\frac{1}{2\pi t}&=\frac{1}{2\pi i}\int_{\b- i \infty}^{\b +i \infty}\Gamma(s)\eta_{p}(s)\frac{ds}{ (2\pi t)^s}.
\end{align}
From \eqref{sigmapmellin} and \eqref{sigmamellin}, for $0<c=\textup{Re}(s)<1$,
\begin{align}\label{fmel}
\int_{0}^{\infty}t^{s-1}\left(\left(\s_p(2\pi t)+\frac{1}{(\s(t)e^{2\pi t}-1)} \right)-\left(1+\frac{1}{1+\frac{1}{\pi p}}\right)\frac{1}{2\pi t}\right)\, dt =\frac{2\Gamma(s)\omega_{p}(s)}{ (2\pi )^s},
\end{align}
where $\omega_p(s)$ is defined in \eqref{10.8}.

Also, from \cite[p.~23, Equation (2.7.1)]{titch},
\begin{align}\label{gmel}
\int_{0}^{\infty}t^{s-1}\left(\frac{1}{e^{2\pi t/\a}-1} -\frac{\a}{2\pi t}\right)\, dt= \left(\frac{\a}{2\pi}\right)^s\Gamma(s)\zeta(s).
\end{align}
Therefore from \eqref{hpas}, \eqref{par}, \eqref{fmel} and \eqref{gmel}, we have
{\allowdisplaybreaks\begin{align}\label{Fnl}
H_{p}(\a)&=\frac{2\pi}{\sqrt{\a}}\frac{2}{2\pi i}\int_{\frac{1}{2}-i \infty}^{\frac{1}{2}+i \infty}\left(\frac{\a}{2\pi}\right)^{1-s}\Gamma(1-s)\zeta(1-s)\frac{\Gamma(s)\omega_{p}(s)}{ (2\pi )^s}\ ds\nonumber\\
&=\frac{\sqrt{\a}}{2\pi^2 i}\int_{\frac{1}{2}-i \infty}^{\frac{1}{2}+i \infty}\Gamma\left(\frac{1-s}{2} \right)\Gamma\left(\frac{1+s}{2} \right)\Gamma\left(1-\frac{s}{2} \right)\Gamma\left(\frac{s}{2} \right)\zeta(1-s)\omega_{p}(s)\a^{-s} \ ds\nonumber\\
&=\frac{2\sqrt{\a}}{\pi^{3/2} i}\int_{\frac{1}{2}-i \infty}^{\frac{1}{2}+i \infty}\Gamma\left(1-\frac{s}{2} \right)\Gamma\left(\frac{1+s}{2} \right)\frac{\xi(1-s)\xi_{p}(s)}{(s(s-1))^2}\a^{-s} \ ds\nonumber\\
&=\frac{-\sqrt{\a}}{2\pi^{3/2} i}\int_{\frac{1}{2}-i \infty}^{\frac{1}{2}+i \infty}\Gamma\left(-\frac{s}{2} \right)\Gamma\left(-\frac{1}{2}+\frac{s}{2} \right)\frac{\xi(1-s)\xi_{p}(s)}{s(s-1)}\a^{-s} \ ds\nonumber\\
&=\frac{1}{\pi^{3/2}} \int_{-\infty}^{ \infty}\Gamma\left(\frac{-1-it}{4}\right)\Gamma\left(\frac{-1+it}{4}\right)\frac{\xi\left(\frac{1+it}{2}\right)\xi_p \left(\frac{1+it}{2}\right)}{1+t^2}\a^{\frac{-it}{2}} \ dt\nonumber\\
&=\frac{1}{\pi^{3/2}}\int_{-\infty}^{ \infty}\left|\Gamma\left(\frac{-1+it}{4}\right)\right|^2\frac{\Xi\left( \frac{t}{2}\right)\Xi_p\left( \frac{t}{2}\right)}{1+t^2}\a^{\frac{-it}{2}} \ dt\nonumber\\
&=\frac{2}{\pi^{3/2}} \int_{0}^{ \infty}\left|\Gamma\left(\frac{-1+it}{4}\right)\right|^2\Xi\left( \frac{t}{2}\right)\Xi_p\left( \frac{t}{2}\right)\frac{\cos \left(\frac{t}{2}\log \a \right)}{1+t^2} \ dt.
\end{align}}
Equation \eqref{Fn} now follows by letting $\a=e^{2n}$ in \eqref{Fnl} and then multiplying both sides of the resulting equation by $\pi^{\frac{3}{2}}/2$.
\end{proof}

\begin{proof}[Theorem \textup{\ref{ramforgen}}][]

We first show that the two series $\sum_{n=1}^{\infty}\phi_{1,p}(n \a)$ and \newline $\sum_{n=1}^{\infty}\phi_{2,p}(n \a)$ are absolutely convergent and uniformly convergent in Re$(\a)\geq\epsilon$ and Re$(\b)\geq\epsilon$ for any $\epsilon>0$.

From \eqref{Digamma1}, \eqref{intDigamma12} and \eqref{phi1px}, we have
\begin{align}\label{phi1pxint}
\phi_{1,p}(x)=-2\int_{0}^{\infty}\frac{t\, dt}{(t^2+x^2)\left(\s(t)e^{2\pi t}-1\right)}.
\end{align}
Then
\begin{align}
\left|\phi_{1,p}(x)\right|\leq\frac{2}{x^2}\int_{0}^{\infty}
\frac{t\, dt}{\left|\sigma(t)e^{2\pi t}-1\right|}.\nonumber
\end{align}
Note that the geometric series formula gives
we have
\begin{align*}
\frac{1}{\sigma(t)e^{2\pi t}-1}=\sum_{k=1}^{\infty}\left(\frac{p-t}{p+t}\right)^ke^{-2 \pi kt}.
\end{align*}
Since $\displaystyle\frac{p-t}{p+t}<1$, we have
\begin{align}
\left|\frac{1}{\sigma(t)e^{2\pi t}-1}\right|&\leq\frac{1}{e^{2\pi t}-1}\nonumber
\end{align}
whence
\begin{align}
\left|\phi_{1,p}(x)\right|\leq \frac{2}{x^2}\int_{0}^{\infty}\frac{t\, dt}{e^{2\pi t}-1}=\frac{1}{12x^2},\nonumber
\end{align}
which proves the absolute (and uniform) convergence of $\sum_{n=1}^{\infty}\phi_{1,p}(n \a)$.

Next, from \eqref{intDigamma22}, \eqref{Digamma2} and \eqref{phi2px}, we have
\begin{align}
\phi_{2,p}(x)=-2\int_{0}^{\infty}\frac{t\, dt}{t^2+x^2}\s_p(2\pi t).\nonumber
\end{align}
Employing \cite[p.~44, Chapter 2, Equation (34)]{koshliakov3}, that is, for $\textup{Re}(s)>1$,
\begin{align}\label{3.34}
\zeta_{p}(s)=\frac{1}{\Gamma(s)}\int_{0}^{\infty}x^{s-1}\s_{p}(x)dx,
\end{align}
where $\s_p(x)$ is defined in \eqref{3.33}, we obtain\footnote{From \cite[p.~15, Chapter 1]{koshliakov3}, we know that $\zeta_p(2)=\displaystyle\frac{\pi^2}{6}\frac{1+\frac{3}{\pi p}\left( 1+\frac{1}{\pi p}\right)}{\left( 1+\frac{1}{\pi p}\right)^2}$.}
\begin{align}
\left|\phi_{2,p}(x)\right|\leq\frac{2\zeta_p(2)}{x^2},\nonumber
\end{align}
since $\sigma_p(2\pi t)>0$. This proves the absolute (and uniform) convergence of $\sum_{n=1}^{\infty}\phi_{2,p}(n \a)$.

Thus we have established the absolute and uniform convergence of both $\sum_{n=1}^{\infty}\Phi_{p}(n \a)$ and $\sum_{n=1}^{\infty}\Phi_{p}(n \b)$ on $(0,\infty)$. We now proceed to prove \eqref{ramforgeneqn}. From \eqref{phi1pxint},
\begin{align}
\sum_{n=1}^{\infty}\phi_{1,p}(n \a) &=-2\sum_{n=1}^{\infty}\int_{0}^{\infty}\frac{t\, dt}{(t^2+n^2\a^2)\left(\s(t)e^{2\pi t}-1\right)} \nonumber \\
&=-\frac{2}{\a^2}\int_{0}^{\infty}\frac{t}{\left(\s(t)e^{2\pi t}-1\right)}\sum_{n=1}^{\infty}\frac{1}{(t/\a)^2+n^2}\ dt,\nonumber
\end{align}
where the interchange of the order of summation and integration can be easily justified.
Invoking the fact \cite[p.~191]{con} that for $t\neq0$, 
\begin{align}
\sum_{n=1}^{\infty}\frac{1}{t^2+4\pi^2n^2} =\frac{1}{2t}\left(\frac{1}{e^{t}-1}+\frac{1}{2}-\frac{1}{t}\right),\nonumber
\end{align}
we find that
\begin{align}\label{sumphi}
\sum_{n=1}^{\infty}\phi_{1,p}(n \a)=-\frac{2\pi}{\a}\int_{0}^{\infty}\frac{1}{\s(t)e^{2\pi t}-1}\left(\frac{1}{e^{2\pi t/\a}-1}-\frac{\a}{2\pi t}+\frac{1}{2}\right)dt.
\end{align}

Similarly, from \eqref{3.34}, we obtain
\begin{align}\label{phi2psum}
\sum_{n=1}^{\infty}\phi_{2,p}(n \a)=-\frac{2\pi}{\a}\int_{0}^{\infty}\s_p(2\pi t)\left(\frac{1}{e^{2\pi t/\a}-1}-\frac{\a}{2\pi t}+\frac{1}{2}\right)dt.
\end{align}

Using Frullani's formula to write $\log(2\pi a)$ as
\begin{align*}
\log(2\pi \a)=\int_{0}^{\infty}\frac{e^{-t/\a} -e^{-2\pi t}}{t}\ dt
\end{align*}
and applying Lemma \ref{cp2new} with $x$ replaced by $2\pi t$, we find that
\begin{align}\label{newCpFrullani}
C_p^{(2)}- \frac{1}{1+\frac{1}{\pi p}}\log(2\pi \a) =\int_{0}^{\infty}\left(\frac{2\pi}{\s(t)e^{2\pi t}-1}-\frac{1}{\left(1+\frac{1}{\pi p}\right)}\frac{e^{-t/\a} }{t}\right)\, dt.
\end{align} 
Thus from \eqref{sumphi} and \eqref{newCpFrullani}, we obtain
\begin{align}\label{newalphafisrt}
 &\sqrt{\a}\left\{\frac{C_p^{(2)}- \frac{1}{\left(1+\frac{1}{\pi p}\right)}\log(2\pi \a)}{2\a}+\sum_{n=1}^{\infty}\phi_{1,p}(n \a)\right\}\nonumber\\
&=\int_{0}^{\infty}\left(\frac{\sqrt{\a}}{t(\s(t)e^{2\pi t}-1)}-\frac{2\pi}{\sqrt{\a}\left(\s(t)e^{2\pi t}-1\right)\left(e^{2\pi t/\a}-1 \right)}-\frac{1}{1+\frac{1}{\pi p}}\frac{e^{-t/\a} }{2\sqrt{\a} t} \right)\, dt.
\end{align}

Similarly, Koshliakov's integral representation of $C_{p}^{(1)}$ \cite[p.~47, Chapter 2, Equation (48)]{koshliakov3}
\begin{align}
C^{(1)}_{p}&=\int_{0}^{\infty}\left(\s_{p}(t)-\frac{e^{-t}}{t}\right)dt, \nonumber
\end{align}
and an application of Frullani's formula 
\begin{align}
\log(2\pi \a)=\int_{0}^{\infty}\frac{e^{-t/\a} -e^{-2\pi t}}{t}\ dt,\nonumber
\end{align}
together give
\begin{align}\label{2a}
&\frac{C_p^{(1)}- \log(2\pi \a)}{2\a} =\frac{1}{2\a}\int_{0}^{\infty}\left(2\pi \s_{p}(2\pi x)-\frac{e^{-x/\a}}{x} \right) dx.
\end{align}
Hence from \eqref{phi2psum} and \eqref{2a}, we deduce 
\begin{align}\label{alphasecond}
&\sqrt{\a}\left\{\frac{C_p^{(1)}- \log(2\pi \a)}{2\a}+\sum_{n=1}^{\infty}\phi_{2,p}(n \a)\right\}\nonumber\\
&=\int_{0}^{\infty}\left(\frac{\sqrt{\a}\s_{p}(2\pi t)}{t}-\frac{2\pi \s_{p}(2\pi t)}{\sqrt{\a}\left(e^{2\pi t/\a}-1 \right)}-\frac{e^{-t/\a} }{2\sqrt{\a} t} \right)dt,
\end{align}
Now add \eqref{newalphafisrt} and \eqref{alphasecond} and use \eqref{capitalphi} to obtain
\begin{align}\label{towardsfinal}
\sqrt{\a}&\left(\frac{C^{(1)}_{p}+C^{(2)}_{p}-\left(1+\frac{1}{1+\frac{1}{\pi p}}\right)\log(2\pi \a)}{2\a}+\sum_{n=1}^{\infty}\Phi_{p}(n \a) \right)\nonumber\\
&=\int_{0}^{\infty}\Bigg(\frac{\sqrt{\alpha}}{t}\left(\s_p(2\pi t)+\frac{1}{\s(t)e^{2\pi t}-1} \right)-\frac{2\pi}{\sqrt{\alpha}\left(e^{2\pi t/\a}-1 \right)}\left(\s_p(2\pi t)+\frac{1}{\s(t)e^{2\pi t}-1} \right)\nonumber\\
&\hspace{6cm}-\left(1+\frac{1}{1+\frac{1}{\pi p}}\right)\frac{e^{-t/\a}}{2 t\sqrt{\alpha}} \Bigg)dt\nonumber\\
&=-H_p(\a)-\left(1+\frac{1}{1+\frac{1}{\pi p}}\right)G(\a),
\end{align}
where $H_p(\a)$ is given by \eqref{hpas} and 
\begin{align}
G(\a):=\int_{0}^{\infty}\left(\frac{1}{t\sqrt{\alpha}(e^{2\pi
t/\alpha}-1)}-\frac{\sqrt{\alpha}}{2\pi t^{2}}
+\frac{e^{-t/\alpha}}{2t\sqrt{\alpha}}\right)\, dt.\nonumber
\end{align}
Now from \cite[Equation (3.9)]{bcbad}, we know that $G(\a)=0$. Hence from \eqref{Fnl} and \eqref{towardsfinal}, we arrive at the equality between the extreme sides of \eqref{ramforgeneqn}. In order to get the second equality in \eqref{ramforgeneqn}, just replace $\a$ by $\b$ in this equation and note that the integral involving the Riemann $\Xi$-function is invariant because $\b=1/\a$. This completes the proof of Theorem \ref{ramforgen}.
\end{proof}

\begin{proof}[Corollary \textup{\ref{p220p0}}][]
We first show the absolute convergence of $\sum_{n=1}^{\infty}\tau(n\a)$ and $\sum_{n=1}^{\infty}\tau(n\b)$ where $\tau(x)$ is defined in \eqref{p220p0psi}.
Using the duplication formula for the psi function \cite[p.~913, Formula \textbf{8.365.6}]{grn}
\begin{align}\label{duppsi}
\psi\left(\frac{x+1}{2}\right)=-2\log 2-\psi\left(\frac{x}{2}\right)+2\psi(x),
\end{align}
twice, that is, once as it is and then with $x$ replaced by $2x$, the functional equation 
\begin{align}\label{psife}
\psi(x+1)=\psi(x)+\frac{1}{x},
\end{align}
and the well-known result \cite[p.~259, Formula 6.3.18]{as}
\begin{equation}\label{psibigo}
\psi(x)=\log x-\frac{1}{2x}+O\left(\frac{1}{x^2}\right),
\end{equation}
we deduce after considerable simplification that
$\tau(x)=O\left(\frac{1}{x^2}\right)$ implying the absolute convergence of $\sum_{n=1}^{\infty}\tau(n\a)$  and $\sum_{n=1}^{\infty}\tau(n\b)$ .

Now let $p\to0$ in Theorem \ref{ramforgen}. Note that from \eqref{gamma1},
\begin{align*}
C_{0}^{(1)}=\lim_{n\to\infty}\left\{\sum_{j=1}^{n-1}\frac{1}{j-\tfrac{1}{2}}-\log\left(n-\frac{1}{2}\right)\right\}.
\end{align*}
Now from \eqref{psife},
\begin{equation}
\psi(z+k)=\psi(z)+\sum_{j=0}^{k-1}\frac{1}{z+j}\nonumber
\end{equation}
whence, letting $z=1/2$ and $k=n-1$ implies
\begin{equation}
\sum_{j=1}^{n-1}\frac{1}{j-\tfrac{1}{2}}=\psi\left(n-\frac{1}{2}\right)-\psi\left(\frac{1}{2}\right).\nonumber
\end{equation}
Therefore
\begin{align}\label{g10}
C_{0}^{(1)}&=\lim_{n\to\infty}\left\{\psi\left(n-\frac{1}{2}\right)-\psi\left(\frac{1}{2}\right)-\log\left(\frac{1}{2}\right)\right\}\nonumber\\
&=\g+\log4.
\end{align}
where in the last step, we used \eqref{psibigo} with $x$ replaced by $n-1/2$ as well as the fact \cite[p.~914, Formula \textbf{8.366.2}]{grn}
\begin{equation}\label{psihalf}
\psi(1/2)=-\g-\log4
\end{equation}
Also, from \eqref{gamma2} and \eqref{s}, we see that
\begin{align}\label{g20}
C_{0}^{(2)}&=\sum_{k=1}^{\infty}\frac{(-1)^k}{k}=-\log2.
\end{align}
Hence from \eqref{g10}, \eqref{g20}, we see that
\begin{align*}
\frac{C^{(1)}_{0}+C^{(2)}_{0}-\lim_{p\to 0}\left(1+\frac{1}{1+\frac{1}{\pi p}}\right)\log(2\pi \a)}{2\a}=\frac{\g-\log(\pi\a)}{2\a}.
\end{align*}
Next, we show that
\begin{align}
\lim_{p\to0}\Phi_{p}(n\a)=\tau(n\a),\nonumber
\end{align}
where $\Phi_p(x)$ and $\tau(x)$ are defined in \eqref{capitalphi} and \eqref{p220p0psi} respectively. From \eqref{capitalphi}, \eqref{phi1px}, \eqref{phi2px} and \eqref{p220p0psi}, we will be done if we can show that
\begin{align}
\lim_{p\to0}\phi_{1,p}(x)&=\psi\left(x+\frac{1}{2}\right)-\log x,\label{phi1plim}\\
\lim_{p\to0}\phi_{2,p}(x)&=-\frac{1}{2x}+\frac{1}{2}\left\{\psi\left(\frac{x}{2}+1\right)-\psi\left(\frac{x+1}{2}\right)\right\}.\label{phi2plim}
\end{align}
We first prove \eqref{phi1plim}. To that end, observe that
\begin{align}\label{phi1plima}
\lim_{p\to0}\phi_{1,p}(x)=\lim_{p\to0}\psi_{1,p}(x)+\frac{1}{x}-\log x.
\end{align}
But from \eqref{Digamma0} and \eqref{g10},
\begin{align}\label{limpsi1atte}
\lim_{p\to0}\psi_{1,p}(x)=-\left(\g+\log4\right)-\frac{1}{x}+\sum_{j=1}^{\infty}\left(\frac{1}{j-\frac{1}{2}}-\frac{1}{x+j-\frac{1}{2}}\right).
\end{align}
Now employing \cite[p.~912, Formula \textbf{8.362.1}]{grn},
\begin{equation}
\psi(x)=-\gamma-\sum_{k=0}^{\infty}\left(\frac{1}{k+x}-\frac{1}{k+1}\right),\nonumber
\end{equation}
twice, once with $x=1/2$, and again with $x$ replaced with $x+1/2$ and using \eqref{psihalf}, we are led to
\begin{equation}\label{psidiff}
\sum_{j=1}^{\infty}\left(\frac{1}{j-\frac{1}{2}}-\frac{1}{x+j-\frac{1}{2}}\right)=\psi\left(x+\frac{1}{2}\right)+\g+\log4.
\end{equation}
Hence substituting \eqref{psidiff} in \eqref{limpsi1atte} proves
\begin{equation}\label{lim1phi}
\lim_{p\to0}\psi_{1,p}(x)=\psi\left(x+\frac{1}{2}\right)-\frac{1}{x}.
\end{equation}
Substituting \eqref{lim1phi} in \eqref{phi1plima} results in \eqref{phi1plim}. To prove \eqref{phi2plim}, we first show that
\begin{align}\label{firstshow}
\lim_{p\to0}\left(\psi_{2,p}(x)+\frac{2e^{2\pi p}}{1+\frac{1}{\pi p}}Q_{2\pi p}(0)\right)=-\frac{1}{x}+\frac{1}{2}\left\{\psi\left(\frac{x}{2}+1\right)-\psi\left(\frac{x+1}{2}\right)\right\}.
\end{align}
From \eqref{Digamma2} and \eqref{intDigamma22},
\begin{align}\label{midshow}
\lim_{p\to0}\left(\psi_{2,p}(x)+\frac{2e^{2\pi p}}{1+\frac{1}{\pi p}}Q_{2\pi p}(0)\right)&=-\frac{1}{2x}-2\lim_{p\to0}\int_{0}^{\infty}\frac{t\sigma_p(2\pi t)}{t^{2}+x^2}\, dt\nonumber\\
&=-\frac{1}{2x}-2\int_{0}^{\infty}\frac{t}{t^{2}+x^2}\lim_{p\to0}\sigma_p(2\pi t)\, dt,
\end{align}
where the passage of the limit through the integral can be easily justified. Now
\begin{align}
\lim_{p\to0}\sigma_p(2\pi t)=\sum_{j=1}^{\infty}\exp\left(-\left(j-\frac{1}{2}\right)2\pi t\right)=\frac{e^{\pi t}}{e^{2\pi t}-1}.\nonumber
\end{align}
Then 
\begin{align}\label{aloduppsi}
\int_{0}^{\infty}\frac{t}{t^{2}+x^2}\lim_{p\to0}\sigma_p(2\pi t)\, dt&=\int_{0}^{\infty}\frac{te^{\pi t}}{(t^{2}+x^2)(e^{2\pi t}-1)}\, dt\nonumber\\
&=\int_{0}^{\infty}\frac{t}{(t^{2}+x^2)(e^{\pi t}+1)}\, dt+\int_{0}^{\infty}\frac{t}{(t^{2}+x^2)(e^{2\pi t}-1)}\, dt\nonumber\\
&=\frac{1}{2}\left(\psi\left(\frac{x+1}{2}\right)-\log\left(\frac{x}{2}\right)\right)-\frac{1}{2}\left(\psi(x)+\frac{1}{2x}-\log x\right),
\end{align}
using the standard integral representations \cite[p.~357-358, Formulas \textbf{3.415.1, 3.415.3}]{grn}, valid for Re$(x)>0$ and Re$(\mu)>0$: 
\begin{align*}
\int_{0}^{\infty}\frac{t\, dt}{(t^2+x^2)(e^{\mu t}-1)}&=\frac{1}{2}\left\{\log\left(\frac{x\mu}{2\pi}\right)-\frac{\pi}{x\mu}-\psi\left(\frac{x\mu}{2\pi}\right)\right\}\nonumber\\
\int_{0}^{\infty}\frac{t\, dt}{(t^2+x^2)(e^{\mu t}+1)}&=\frac{1}{2}\left\{\psi\left(\frac{x\mu}{2\pi}+\frac{1}{2}\right)-\log\left(\frac{x\mu}{2\pi}\right)\right\}.
\end{align*}
Substituting \eqref{duppsi} in \eqref{aloduppsi}, employing \eqref{psife} and simplifying, we arrive at
\begin{align}\label{aftershow}
\int_{0}^{\infty}\frac{t}{t^{2}+x^2}\lim_{p\to0}\sigma_p(2\pi t)\, dt=\frac{1}{4x}+\frac{1}{4}\left\{\psi\left(\frac{x+1}{2}\right)-\psi\left(\frac{x}{2}+1\right)\right\}.
\end{align}
Now substitute \eqref{aftershow} in \eqref{midshow} to obtain \eqref{firstshow}. Finally, letting $p\to0$ in \eqref{phi2px} and invoking \eqref{firstshow} results in \eqref{phi2plim}. Along with the prior establishment of \eqref{phi1plim} and the fact that from \eqref{infzero}, \eqref{10.10a} and \eqref{10.10}, we have $\Xi_0(t)=\left(\sqrt{2}\cos\left(t\log2\right)-1\right)\Xi(t)$, this shows that the proof of \eqref{p220p0eqn} is now complete.
\end{proof}
Another proof of Corollary \ref{p220p0} using \eqref{ramfor} is now given. We emphasize, however, that it is difficult to conceive the modular transformation in Corollary \ref{p220p0} just from the corresponding modular transformation in \eqref{ramfor}. As will be seen from the proof, such a formula was conceived by us only after comparing the forms of the two integrals involving the Riemann $\Xi$-function in Corollary \ref{p220p0} and \eqref{ramfor}. Indeed, it was only after we derived Corollary \ref{p220p0} using Theorem \ref{ramforgen} that we came to know that it could be also derived from \eqref{ramfor}.

\noindent
\textbf{A second proof of Corollary \ref{p220p0}.} Note that 
\begin{align*}
&\cos\left(\frac{1}{2}t\log \a \right)\left(\sqrt{2}\cos\left(\frac{1}{2}t\log2\right)-1\right)\nonumber\\
&=\frac{1}{\sqrt{2}}\left(\cos\left(\frac{1}{2}t\log(2\a) \right)+\cos\left(\frac{1}{2}t\log\left(\frac{\a}{2}\right)\right)\right)-\cos\left(\frac{1}{2}t\log \a \right),
\end{align*}
Hence add the two resulting identities obtained by first replacing $\a$ replaced by $2\a$ in \eqref{ramfor}, then with replacing $\a$ by $\a/2$ in \eqref{ramfor}, then multiply the resultant by $1/\sqrt{2}$, and lastly subtract \eqref{ramfor} from it. This leads to
\begin{align}\label{ha0}
\mathscr{H}(\a)&=\mathscr{H}(\b)\nonumber\\
&=-\frac{1}{\pi^{3/2}}\int_{0}^{\infty}\left|\Gamma\left( \frac{-1+it}{4}\right) \right|^{2}\Xi^{2}\left(\frac{t}{2} \right)\frac{\cos\left(\frac{1}{2}t\log \a \right)\left(\sqrt{2}\cos\left(\frac{1}{2}t\log2\right)-1\right)}{1+t^2}dt,
\end{align}
where
\begin{align}\label{ha}
\mathscr{H}(\a)&:=\frac{1}{\sqrt{2}}\left\{\sqrt{2\a}\left(\frac{\gamma - \log(4 \pi \a)}{4\a}\right)+\sqrt{\frac{\a}{2}}\left(\frac{\gamma - \log(\pi \a)}{\a}\right)\right\}-\sqrt{\a}\left(\frac{\gamma - \log(2 \pi \a)}{2\a}\right)\nonumber\\
&\quad+\frac{1}{\sqrt{2}}\left\{\sqrt{2\a}\sum_{n=1}^{\infty}\phi(2n\a)+\sqrt{\frac{\a}{2}}\sum_{n=1}^{\infty}\phi\left(\frac{n\a}{2}\right)\right\}-\sqrt{\a}\sum_{n=1}^{\infty}\phi(n\a).
\end{align}
Using \eqref{phi}, we see that
\begin{align}\label{psisimp0}
&\sqrt{\a}\sum_{n=1}^{\infty}\left(\phi(2n\a)+\frac{1}{2}\phi\left(\frac{n\a}{2}\right)-\phi(n\a)\right)\nonumber\\
&=\sqrt{\a}\sum_{n=1}^{\infty}\left(\psi(2n\a)+\frac{1}{2}\psi\left(\frac{n\a}{2}\right)-\psi(n\a)+\frac{1}{4n\a}-\frac{1}{2}\log(2n\a)\right).
\end{align}
Next, employing \eqref{duppsi} in the form
\begin{align}
\psi(x)=\frac{1}{2}\left\{\psi\left(\frac{x+1}{2}\right)+\psi\left(\frac{x}{2}\right)\right\}+\log2\nonumber
\end{align}
with $x$ replaced by $2n\a$ in the first step below, we see that
\begin{align}\label{psisimp}
&\psi(2n\a)+\frac{1}{2}\psi\left(\frac{n\a}{2}\right)-\psi(n\a)\nonumber\\
&=\frac{1}{2}\left\{\psi\left(n\a+\frac{1}{2}\right)-\psi\left(n\a\right)\right\}+\log2+\frac{1}{2}\psi\left(\frac{n\a}{2}\right)\nonumber\\
&=\frac{1}{2}\psi\left(n\a+\frac{1}{2}\right)-\frac{1}{4}\psi\left(\frac{n\a+1}{2}\right)+\frac{1}{2}\log2+\frac{1}{4}\psi\left(\frac{n\a}{2}\right)\nonumber\\
&=\frac{1}{2}\psi\left(n\a+\frac{1}{2}\right)-\frac{1}{4}\psi\left(\frac{n\a+1}{2}\right)+\frac{1}{2}\log2+\frac{1}{4}\left(\psi\left(\frac{n\a}{2}+1\right)-\frac{2}{n\a}\right),
\end{align} 
where in the penultimate step, we again used \eqref{duppsi} with $x$ replaced by $n\a$ and in the ultimate step, we used the functional equation \eqref{psife}. 

Substituting \eqref{psisimp} in \eqref{psisimp0}, we arrive at
\begin{align}\label{psisimp2}
\sqrt{\a}\sum_{n=1}^{\infty}\left(\phi(2n\a)+\frac{1}{2}\phi\left(\frac{n\a}{2}\right)-\phi(n\a)\right)=\frac{1}{2}\sqrt{\a}\sum_{n=1}^{\infty}\tau(n\a),
\end{align}
where $\tau(x)$ is defined in \eqref{p220p0psi}. From \eqref{psisimp2} and \eqref{ha}, we finally see that
\begin{align}\label{ha1}
\mathscr{H}(\a)=\frac{1}{2}\sqrt{\a}\Bigg(\frac{\gamma-\log (\pi \a)}{2\a}+\sum_{n=1}^{\infty}\tau(n\a)\Bigg).
\end{align}
From \eqref{ha0} and \eqref{ha1}, we see that the proof of Corollary \ref{p220p0} is now complete.

\hfill $\square$
%

\section{Concluding remarks}\label{cr}
We have merely scratched the tip of the iceberg by obtaining a couple of new results in the theory of Koshliakov zeta functions which lay dormant in the mathematical community for about 60-70 years. Thus, obviously, there are many fundamental questions which remain open and are not only interesting but also important. We conclude our paper with some of these.

1. Koshliakov \cite[p.~22, Chapter 1, Equation (37)]{koshliakov3} has shown that for each $p\in\mathbb{R+}$ the trivial zeros of $\zeta_p(s)$ and $\eta_p(s)$ are at negative even integers $-2k$. He \cite[p.~22, Chapter 1, Equation (38)]{koshliakov3} has also generalized Euler's formula to explicitly evaluate $\zeta_p(s)$ at even positive integers, that is, 
\begin{align*}
\zeta_p(2k)=\frac{(-1)^{k+1}(2\pi)^{2k}}{2(2k)!}B^{(p)}_{2k}.
\end{align*}
Then, with the help of functional equation \eqref{2.30}, he evaluates $\eta_p(-2k-1)$ \cite[p.~22, Chapter 1, Equation (38)]{koshliakov3}:
\begin{align*}
\eta_{p}(-(2k-1))=-\frac{B^{(p)}_{2k}}{2k}.
\end{align*}
However, there is no mention of $\zeta_p(-2k-1)$ and hence also of $\eta_p(2k)$ in \cite{koshliakov3}. The difficulty in evaluating $\eta_p(2k)$ partly lies in the fact that $\eta_p(s)$ is not a Dirichlet series. On the other hand in Theorem \ref{Ramanujantype1}, we were able to obtain an analogue of Ramanujan's formula \eqref{zetaodd} for $\zeta_p(2k+1)$ but not for $\eta_p(2k+1)$. It may be interesting to try to get closed-form evaluations of $\zeta_p(-2k-1)$ and $\eta_p(2k)$ as well as attempt a Ramanujan-type formula for $\eta_p(2k+1)$, it is exists.

2. For a finite positive $p$, it would be interesting to obtain information about the non-trivial zeros of $\zeta_p(s)$ and $\eta_p(s)$. We have embarked upon such a study \cite{kzf2}.

3. In Corollary \ref{hurwitzhalf}, we obtained a Ramanujan-type formula for $\zeta\left(2m+1,\frac{1}{2}\right)$. Does there exist a full-fledged Ramanujan-type formula for $\zeta(2m+1,a)$ for any $a$ in $(0,1]$.

4. In Koshliakov's manuscript \cite{koshliakov3} as well as in this paper, we have seen that even in the case of $p\to0$, we get several new results. 

The only case where a result obtained by letting $p\to0$ in the general result is obtainable from the $p\to\infty$ case of the latter is when we are working on the critical line Re$(s)=1/2$. For example, we have seen that Theorem \ref{p220p0} can be also obtained from \eqref{ramfor} because $\Xi_0(t)=\left(\sqrt{2}\cos\left(t\log2\right)-1\right)\Xi(t)$. 

5. Multiplying Ramanujan's identity \eqref{ramfor} throughout by 2 and then adding the corresponding sides of the resulting equation to those of \eqref{p220p0eqn} gives the following result whose application is given immediately after its statement.

Let $\Omega(x)$ be defined by
$\Omega(x):=2\psi(2x)+\psi\left(\frac{x}{2}\right)+\frac{3}{2x}-3\log x-\log 2$.
If $\a$ and $\b$ are positive numbers such that $\a\b =1$, then
{\allowdisplaybreaks\begin{align}\label{omegaxeqn}
\sqrt{\a}&\left\{\frac{3\g-2\log 2-3\log(\pi\a)}{2\a}+\sum_{n=1}^{\infty}\Omega(n\a)\right\}
=\sqrt{\b}\left\{\frac{3\g-2\log2-3\log(\pi\b)}{2\b}+\sum_{n=1}^{\infty}\Omega(n\b)\right\}\nonumber\\
&=-\frac{2\sqrt{2}}{\pi^{3/2}}\int_{0}^{\infty}\left|\Gamma\left( \frac{-1+it}{4}\right) \right|^{2}\Xi^{2}\left(\frac{t}{2} \right)\frac{\cos\left(\frac{1}{2}t\log \a \right)\cos\left(\frac{1}{2}t\log2\right)}{1+t^2}dt.
\end{align}}
Now let $\a=2$ in \eqref{omegaxeqn}, so that $\b=1/2$, and observe that the resulting integral involving the Riemann $\Xi$-function is always negative. This results in the following two inequalities involving infinite series of digamma function, namely,
\begin{align*}
\sum_{n=1}^{\infty}\left\{2\psi(4n)+\psi(n)+\frac{3}{4n}-\log(16n^3)\right\}&<\frac{\log(32\pi^3)-3\g}{4},\\
\sum_{n=1}^{\infty}\left\{2\psi(n)+\psi\left(\frac{n}{4}\right)+\frac{3}{n}-\log\left(\frac{n^3}{4}\right)\right\}&<\log\left(\frac{\pi^3}{2}\right)-3\g.
\end{align*}

6. New modular relations can be obtained in the setting of Koshliakov zeta functions by studying generalizations of the integrals involving the Riemann $\Xi$-function. A plethora of such integrals in the setting of the Riemann zeta function have been studied in \cite{series}, \cite{dixthet}, \cite{dixitmoll} and \cite{drz5}. 

Further, the special cases $p\to0$ of such modular relations will be new results in the theory of Riemann zeta function. We note, for example, that letting $p\to0$ in \eqref{10.19} and \eqref{10.27} respectively lead to new results, namely, for $ab=\pi$,
\begin{align*}
&\sqrt{a^3}\int_{0}^{\infty}x e^{-a^2x^2}\left(\frac{-1}{e^{2\pi x}+1}+\frac{e^{\pi x}}{e^{2\pi x}-1}-\frac{1}{2\pi x}\right)\, dx\nonumber\\
&=\sqrt{b^3}\int_{0}^{\infty}x e^{-b^2x^2}\left(\frac{-1}{e^{2\pi x}+1}+\frac{e^{\pi x}}{e^{2\pi x}-1}-\frac{1}{2\pi x}\right)\, dx\nonumber\\
&=\frac{-1}{8}\pi^{-\frac{7}{4}}\int_{0}^{\infty}\Xi\left(\frac{t}{2}\right)\left|\Gamma\left(-\frac{1}{4}+\frac{it}{4} \right) \right|^2\cos\left(\frac{1}{2}t\log\left(\frac{\sqrt{\pi}}{a}\right)\right)\left(\sqrt{2}\cos\left(t\log2\right)-1\right)\, dt 
\end{align*}
and
\begin{align*}
\sqrt{a}\int_{0}^{\infty}&e^{-a^2x^2}\left(\tau(x)+\frac{1}{2x}\right)\, dx=\sqrt{b}\int_{0}^{\infty}e^{-b^2x^2}\left(\tau(x)+\frac{1}{2x}\right)\, dx\nonumber\\
&=4\pi^{\frac{1}{4}}\int_{0}^{\infty}\frac{\Xi\left( \frac{t}{2}\right)}{t^2+1}\frac{\cos\left(\frac{1}{2}t\log\left(\sqrt{\pi}/a\right)\right)\left(\sqrt{2}\cos\left(t\log2\right)-1\right)}{\cosh\frac{\pi t}{2}}\, dt,
\end{align*}
where $\tau(x)$ is defined in \eqref{p220p0psi}.
\begin{center}
\begin{tabular}{|c|c|c|}
\hline
Some important Functions & Generalizations in Koshliakov's manuscript & Location in manuscript \\
\hline

$\displaystyle\frac{1}{e^s-1}$& $\frac{1}{\sigma\left(\frac{s}{2\pi}\right)e^{s}-1}$& \cite[p. ~13, Equation (2)]{koshliakov3}\\

&$\sigma_{p}(s)$& \cite[p. ~44, Equation (33)]{koshliakov3}\\

\hline
$\zeta(s)$ & $\zeta_{p}(s)$ & \cite[p. ~15, Equation (11)]{koshliakov3}\\

& $\eta_{p}(s)$ & \cite[p. ~20, Equation (29)]{koshliakov3} \\

& $\omega_{p}(s):=\frac{\zeta_{p}(s)+\eta_{p}(s)}{2}$ & \cite[p. ~148, Equation (8)]{koshliakov3} \\

\hline
$\xi(s):=\pi^{-s/2}\frac{s(s-1)}{2}\Gamma\left( \frac{s}{2}\right)\zeta(s)$& $\xi_{p}(s)$ & \cite[p. ~148, Equation (10)]{koshliakov3} \\
\hline
$\Xi(t):=\xi\left( \frac{1}{2}+i t\right)$& $\Xi_{p}(t)$ & \cite[p. ~148, Equation (10)]{koshliakov3} \\

\hline
$\zeta(s,a)$ (Hurwitz zeta function) & $\zeta_{\omega}(s,a)$& \cite[p. ~62, Equation (41)]{koshliakov3}\\

& ${\eta_{\omega}(s,a)}$  & \cite[p. ~107, Equation (16)]{koshliakov3}\\
\hline
$\gamma$ (Euler constant)   &$C_{p}^{(1)}$& \cite[p. ~46, Equation (46)]{koshliakov3} \\

&$C_{p}^{(2)}$& \cite[p. ~46, Equation (47)]{koshliakov3} \\
\hline
$\Gamma(s)$    &$\Gamma_{1,p}(s)$& \cite[p. ~66, Equation (1)]{koshliakov3}\\

&$\Gamma_{2,p}(s)$  &  \cite[p. ~121, Equation (1)]{koshliakov3}\\
\hline
$\psi(x):=\Gamma'(s)/\Gamma(s)$ & $\Gamma'_{1,p}(s)/\Gamma_{1,p}(s)$ & \cite[p. ~71, Equation (14)]{koshliakov3}\\

& $\Gamma'_{2,p}(s)/\Gamma_{2,p}(s)$ & \cite[p. ~124, Equation (10)]{koshliakov3}\\

\hline
$\sum_{n=0}^{\infty}e^{-n^2\pi s}$ & $\chi_p(s)$ & \cite[p. ~153, Equation (29)]{koshliakov3}  \\
\hline
\end{tabular}
\end{center}

\begin{center}
\textbf{Acknowledgements}
\end{center}
Our first and foremost thanks go to the Center for Research Libraries (CRL), Chicago for kindly providing a scanned copy of Koshliakov's manuscript to the first author. They also sincerely thank the UIUC Math librarian Tim Cole for informing the first author about the availability of this manuscript at CRL. The first author's research was partially supported by SERB MATRICS grant MTR/2018/000251 and CRG grant CRG/2020/002367. He sincerely thanks SERB for the support. The authors really appreciate the help of Elena Peterukhina for making available the PhD thesis of A.~G.~Kisunko \cite{kisunko}. They also thank Uddipta Ghosh, Sudipta Sarkar and Somnath Gandal for interesting discussions related to the topic of the paper.

\end{document}